\definecolor{bl}{rgb}{0.0,0.2,0.6}
\newtheorem*{thma}{Theorem A}
\newtheorem*{thmb}{Theorem B}
\newtheorem{theorem}{Theorem}[section]
\newtheorem{definition}[theorem]{Definition}
\newtheorem{lemma}[theorem]{Lemma}
\newtheorem{proposition}[theorem]{Proposition}
\newtheorem{corollary}[theorem]{Corollary}
\newtheorem*{corollary*}{Corollary}
\title{Classifying spaces with virtually cyclic stabilizers for linear groups }
\author{Dieter Degrijse\footnote{supported by the Danish National Research Foundation through the Centre for Symmetry and Deformation (DNRF92)}, Ralf K\"{o}hl and Nansen Petrosyan}
\newcommand{\Z}{\mathbb Z}
\newcommand{\G}{\Gamma}
\begin{document}

\maketitle
\begin{abstract} We show that every discrete subgroup of $\mathrm{GL}(n,\mathbb{R})$ admits a finite dimensional classifying space with virtually cyclic stabilizers. Applying our methods to $\mathrm{SL}(3,\mathbb{Z})$, we obtain a four dimensional classifying space with virtually cyclic stabilizers and a decomposition of the algebraic $K$-theory of its group ring.

\end{abstract}
\section{Introduction}
A classifying space of a discrete group $\Gamma$ for a family of subgroups $\mathcal{F}$ is a $\Gamma$-CW complex $X$ with stabilizers in $\mathcal{F}$ such that $X^H$  is contractible for every $H \in \mathcal{F}$. Such a space is also called a model for $E_{\mathcal{F}}\Gamma$.  A model for $E_{\mathcal{F}}\Gamma$  always exists for any given discrete group $\Gamma$ and a family of subgroups $\mathcal{F}$, but it need not be of finite type or finite dimensional (see \cite{Luck2}). The smallest possible dimension of a model for $E_{\mathcal{F}}\Gamma$ is the geometric dimension of $\Gamma$ for the family $\mathcal{F}$, denoted by $\mathrm{gd}_{\mathcal{F}}(\Gamma)$. When $\mathcal{F}$ is the family of finite, respectively, virtually cyclic subgroups of $\Gamma$, $E_{\mathcal{F}}\Gamma$ ($\mathrm{gd}_{\mathcal{F}}(\Gamma)$) is denoted by $\underline{E}\Gamma$ ($\underline{\mathrm{gd}}(\Gamma)$), respectively, $\underline{\underline{E}}\Gamma$ ($\underline{\underline{\mathrm{gd}}}(\Gamma)$).  

For any group $\G$, one always has ${\underline{\mathrm{gd}}}(\Gamma)\leq \underline{\underline{\mathrm{gd}}}(\Gamma)+1$ (see \cite{LuckWeiermann}).
In all examples known so far, a group $\G$ admits a finite dimensional model for $\underline{\underline{E}}\G$ if it admits a finite dimensional model for $\underline{E}\G$. However, it is still an open problem whether this is always the case. It is known that the invariant $\underline{\underline{\mathrm{gd}}}(\Gamma)$ can be arbitrarily larger than $\underline{\mathrm{gd}}(\Gamma)$ (see \cite{DP}). 

Questions concerning finiteness properties of ${\underline{E}}\Gamma$  and $\underline{\underline{E}}\Gamma$ have been especially motivated by the Farrell--Jones isomorphism conjecture in K- and L-theory (see below and \cite{DL, Lafont, LuckReich}).
Finding models for $\underline{\underline{E}}\Gamma$ with good finiteness properties has been proven to be much more difficult than for ${\underline{E}}\Gamma$. So far, such models have
 been found for polycyclic-by-finite groups~\cite{LuckWeiermann}, word-hyperbolic groups \cite{LearyPineda}, relatively hyperbolic groups \cite{LafontOrtiz},  countable elementary amenable group of finite Hirsch length \cite{DP1, DP} and groups acting isometrically with discrete orbits on separable complete  CAT(0)-spaces, such as mapping class groups and finitely generated linear groups over fields of positive characteristic \cite{DP2, Luck3}.

Here, we will show that certain subgroups of $\mathrm{GL}(n,\mathbb{C})$ admit a finite dimensional classifying space with virtually cyclic stabilizers.

\begin{thma}\label{th: main} Let $\Gamma$ be a discrete subgroup of $\mathrm{GL}(n,\mathbb{R})$ such that the Zariski closure of $\G$ in $\mathrm{GL}(n,\mathbb{R})$ has dimension $m$. Then $\Gamma$ admits a model for $\underline{\underline{E}}\Gamma$ of dimension $m+1$.
\end{thma}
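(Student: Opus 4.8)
The plan is to produce a model for $\underline{E}\G$ of dimension at most $m$ from the symmetric space of a Lie group, and then to pass to $\underline{\underline{E}}\G$ through the L\"uck--Weiermann pushout, the only genuine work being the classifying spaces attached to commensurators of infinite virtually cyclic subgroups.

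First I would realize $\underline{E}\G$ Lie-theoretically. Let $G$ be the Zariski closure of $\G$ in $\mathrm{GL}(n,\mathbb{R})$, a real algebraic group, hence a Lie group with finitely many connected components and with $\dim_{\mathbb{R}}G=m$. Choosing a maximal compact subgroup $K\le G$, the homogeneous space $G/K$ is diffeomorphic to $\mathbb{R}^{m-\dim K}$ and is a model for $\underline{E}G$; restricting the action to the discrete subgroup $\G$ gives a proper smooth $\G$-manifold with finite stabilizers and contractible fixed point sets, which after equivariant triangulation yields a model for $\underline{E}\G$. Hence $\underline{\mathrm{gd}}(\G)\le\dim(G/K)=m-\dim K\le m$. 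The same argument applies to every subgroup $\Lambda\le\G$, since the Zariski closure of $\Lambda$ is contained in $G$ and so has dimension at most $m$; in particular $\underline{\mathrm{gd}}(\Lambda)\le m$ for all such $\Lambda$.

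Next I would invoke the L\"uck--Weiermann pushout for the pair of families $(\mathcal{FIN},\mathcal{VCYC})$. Let $I$ index the commensurability classes of infinite virtually cyclic subgroups of $\G$, and for $H\in I$ write $N=N_\G[H]$ for the commensurator and $\mathcal{F}[H]$ for the family of subgroups of $N$ that are finite or commensurable with $H$. Expressing $\underline{\underline{E}}\G$ as the homotopy pushout built from $\underline{E}\G$, the induced complexes $\G\times_N\underline{E}N$ and $\G\times_N E_{\mathcal{F}[H]}N$, and counting cells through the mapping cylinder, yields
$$\underline{\underline{\mathrm{gd}}}(\G)\le\max\Big\{\underline{\mathrm{gd}}(\G),\ 1+\sup_{H\in I}\underline{\mathrm{gd}}\big(N_\G[H]\big),\ \sup_{H\in I}\mathrm{gd}_{\mathcal{F}[H]}\big(N_\G[H]\big)\Big\}.$$
By the first paragraph the first term is at most $m$ and the second is at most $1+m=m+1$, so the theorem reduces to the single estimate $\mathrm{gd}_{\mathcal{F}[H]}(N_\G[H])\le m+1$ for each $H\in I$; note the bound $m+1$ is already forced by the (unavoidable) mapping-cylinder term $1+\underline{\mathrm{gd}}(N_\G[H])$.

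The heart of the argument, and the step I expect to be the main obstacle, is this last estimate, where linearity must be used a second time. Replacing $H$ by a finite-index infinite cyclic subgroup $\langle\gamma\rangle$ and using the multiplicative Jordan decomposition of $\gamma$, any element of $N$ commensurating $\langle\gamma\rangle$ conjugates $\gamma$ to a commensurable element, which permits only a rescaling of its eigenvalues by rational powers; analysing this eigenvalue data should produce a normal infinite virtually cyclic subgroup $C\trianglelefteq N$ commensurable with $H$ (for instance $C=N\cap\overline{\langle\gamma\rangle}$, which is normalized by $N$ because $N$ normalizes the Zariski closure $\overline{\langle\gamma\rangle}$). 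Since $C$ is normal and $N=N_\G[C]$, one checks that $\mathcal{F}[H]$ coincides with $\pi^{-1}(\mathcal{FIN}(N/C))$ for the quotient map $\pi\colon N\to N/C$, so that the pullback $\pi^{*}\underline{E}(N/C)$ is a model for $E_{\mathcal{F}[H]}(N)$ and therefore $\mathrm{gd}_{\mathcal{F}[H]}(N)\le\underline{\mathrm{gd}}(N/C)$. It then remains to bound $\underline{\mathrm{gd}}(N/C)\le m$: passing to $N_G(\overline{\langle\gamma\rangle})$ and dividing by the one-dimensional algebraic subgroup $\overline{\langle\gamma\rangle}$ exhibits $N/C$ (up to a finite normal subgroup, which does not increase $\underline{\mathrm{gd}}$) as a linear group whose Zariski closure has dimension at most $m-1$, to which the first paragraph applies. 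The delicate points here are handling the orientation-reversal ($D_\infty$) case and the attendant finite-index reductions when assembling the model over all of $N$, and above all controlling the quotient: ensuring that the image of $N$ in the lower-dimensional algebraic group is handled correctly despite possible failure of discreteness, so that the Zariski dimension genuinely drops by at least one. This is precisely where the representation-theoretic input is essential, and where I expect the bulk of the technical effort to lie.

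Combining the three estimates through the displayed pushout inequality gives $\underline{\underline{\mathrm{gd}}}(\G)\le m+1$, which is the assertion of the theorem.
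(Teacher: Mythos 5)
Your skeleton is the same as the paper's: bound $\underline{\mathrm{gd}}(\Gamma)\leq m$ by the dimension of the Zariski closure, feed this into the L\"uck--Weiermann pushout, and reduce the remaining estimate to $\underline{\mathrm{gd}}(N_\Gamma[H]/C)$ after replacing $H$ by a representative $C$ that is normal in $N:=N_\Gamma[H]$ (this is Lemma \ref{lemma: norm} combined with Lemma 4.2 of \cite{DP}). But both concrete devices you propose for the ``heart'' of the argument fail. First, $C=N\cap\overline{\langle\gamma\rangle}$ need not be virtually cyclic, and $\overline{\langle\gamma\rangle}$ need not be one-dimensional: for a hyperbolic $\gamma\in\mathrm{SL}(3,\mathbb{Z})$ with distinct real irrational eigenvalues, the only multiplicative relation among the eigenvalues is the determinant relation, so $\overline{\langle\gamma\rangle}$ is a two-dimensional torus, and by the Dirichlet unit theorem $N=C_\Gamma(\gamma)\cong\mathbb{Z}^2\oplus\mathbb{Z}_2$ lies entirely inside that torus; thus your $C$ is all of $N$, not a virtually cyclic group. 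The paper's Lemma \ref{lemma: norm} instead produces a normal infinite cyclic representative by a primitive-generator divisibility argument inside a finitely generated abelian normal subgroup of $N$; intersecting with the Zariski closure does not do it. Moreover, even after choosing such a $C$ correctly, the kernel of $N/C\to N_G(\overline{C})/\overline{C}$ is $(N\cap\overline{C})/C$, which in this same example is $\mathbb{Z}\oplus\mathbb{Z}_2$ --- infinite --- so the claim ``up to a finite normal subgroup'' is false, and with it the assertion that $N/C$ is exhibited as a group to which your first paragraph applies.

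Second, the estimate $\underline{\mathrm{gd}}(N/C)\leq m+1$ is exactly the point you leave open (``where I expect the bulk of the technical effort to lie''), and the mechanism the paper uses to close it is genuinely different from the one you sketch. Rather than dividing by $\overline{H}$, the paper divides by the full algebraic radical $R$ of $\overline{N}$: Abels' theorem (Proposition \ref{abels}) guarantees that the image of the discrete group $N$ in the semisimple quotient $\overline{N}/R$ is discrete, because that image is automatically Zariski dense there --- this is how the discreteness problem you flag is actually overcome, and it is only available for the radical quotient, not for the quotient by $\overline{H}$. The piece of $N/H$ absorbed into the radical, namely $(R\cap N)/H$, is then handled with no discreteness argument at all, but algebraically: it is finitely generated solvable of Hirsch length at most $m-k-1$ (Proposition \ref{prop: exact seq}, using that Hirsch length is bounded by $\underline{\mathrm{gd}}$, which is bounded by $\dim R=m-k$), and Corollary 4 of \cite{FloresNuc} bounds $\underline{\mathrm{gd}}$ of its finite extensions by Hirsch length plus one. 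The two pieces are reassembled through the extension inequality of Corollary 2.3 of \cite{DP}, giving $\underline{\mathrm{gd}}(N/H)\leq(m-k-1)+1+k=m$. Without this radical-splitting device, or some substitute for it, your proposal proves only the routine bounds and not the theorem.
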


Recall that a subgroup $\G$ of $\mathrm{GL}(n,\mathbb C)$ is said to be of {\it integral characteristic} if the coefficients of the characteristic polynomial of  every element of $\G$ are algebraic integers. It follows that $\G$ has integral characteristic if and only if the characteristic roots of every element of $\G$ are algebraic integers (see \cite[\S 2]{AS}). The standard embedding $\G \hookrightarrow \mathrm{GL}(n,\mathbb{C}) \hookrightarrow \mathrm{SL}(n+1,\mathbb{C})$ allows one to consider $\G$ as a subgroup of integral characteristic of $\mathrm{SL}(n+1,\mathbb{C})$.

\begin{thmb}\label{th: follows} Let $\Gamma$ be a finitely generated subgroup of $\mathrm{GL}(n,\mathbb{C})$ of integral characteristic such that there is an upper bound on the Hirsch lengths of its finitely generated unipotent subgroups.  Then $\Gamma$ admits a finite dimensional  model for $\underline{\underline{E}}\Gamma$.
\end{thmb}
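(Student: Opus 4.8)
The plan is to deduce Theorem B from Theorem A by realizing $\Gamma$ as acting on a finite-dimensional CAT(0) polyhedral complex whose cell stabilizers are \emph{real} discrete linear groups, to which Theorem A applies directly. Since only a finite-dimensional model is claimed (no sharp dimension estimate, in contrast to Theorem A), I am free to spend dimensions liberally in the assembly step.

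First I would set up the arithmetic structure in the spirit of \cite{AS}. The entries of the elements of $\Gamma$ generate a finitely generated integral domain $A \subseteq \mathbb{C}$, so $\Gamma \le \mathrm{GL}(n,A)$. Using the two hypotheses --- integral characteristic and the uniform bound on the Hirsch lengths of finitely generated unipotent subgroups --- I would produce a finite set $S$ of non-archimedean places, with completions $K_v$, together with a faithful homomorphism embedding $\Gamma$ \emph{discretely} into $G_\infty \times G_f$, where $G_\infty$ is the real Lie group $\mathrm{GL}(n,\mathbb{C}) \hookrightarrow \mathrm{GL}(2n,\mathbb{R})$ coming from the given embedding (so the archimedean projection stays faithful), and $G_f = \prod_{v \in S}\mathrm{GL}(n,K_v)$. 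The integral characteristic condition forces the characteristic roots to be algebraic integers, hence to have nonnegative valuation at every $v$, while the bound on unipotent Hirsch lengths is what keeps $S$ finite and the resulting complex finite-dimensional; establishing this discrete embedding with $S$ finite is the main obstacle of the whole argument. Letting $\beta$ be the product of the Bruhat--Tits buildings of the factors of $G_f$, the group $\Gamma$ acts on the finite-dimensional complete CAT(0) complex $\beta$, and by convexity of fixed-point sets $\beta$ is a model for $E_{\mathcal{G}}\Gamma$, where $\mathcal{G}$ is the family of subgroups whose image in $G_f$ is relatively compact, equivalently those fixing a point of $\beta$.

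Next I would analyze the members of $\mathcal{G}$. If $H \in \mathcal{G}$, then $H$ has relatively compact image in $G_f$, so $H \subseteq \Gamma \cap (G_\infty \times C)$ for some compact $C \subseteq G_f$; since $\Gamma$ is discrete and closed in $G_\infty \times G_f$ and the projection to $G_\infty$ is proper on $G_\infty \times C$ and faithful on $\Gamma$, the subgroup $H$ is isomorphic to a \emph{discrete} subgroup of $G_\infty \le \mathrm{GL}(2n,\mathbb{R})$. Its Zariski closure sits inside that of $\Gamma$, so its dimension is bounded by a constant $m$ independent of $H$. Theorem A then provides a model for $\underline{\underline{E}}H$ of dimension at most $m+1$, uniformly in $H \in \mathcal{G}$; note that Theorem A does not require $H$ to be finitely generated, which is essential here since the point stabilizers need not be.

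Finally I would assemble these data into a finite-dimensional model for $\underline{\underline{E}}\Gamma$. The family $\mathcal{VC}$ of virtually cyclic subgroups is \emph{not} contained in $\mathcal{G}$ --- a cyclic group generated by an element with a non-unit characteristic root acts on $\beta$ by a hyperbolic isometry --- so the assembly cannot be a single application of the inductive formula for nested families. Instead I would use the pushout construction for the pair $(\mathcal{FIN},\mathcal{VC})$ from \cite{LuckWeiermann} on top of the action on $\beta$: the proper action of $\Gamma$ on $X \times \beta$, with $X$ the symmetric space of $G_\infty$, already yields a finite-dimensional $\underline{E}\Gamma$ via the inductive formula applied to $\mathcal{FIN} \subseteq \mathcal{G}$ (each stabilizer has $\underline{\mathrm{gd}} \le \dim X$), and it then remains to bound uniformly the relative geometric dimension of the commensurators $N_\Gamma[\langle\gamma\rangle]$ of the infinite cyclic subgroups. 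Those $\gamma$ that are elliptic on $\beta$ lie in some $H \in \mathcal{G}$ and are covered by Theorem A, while those that are hyperbolic on $\beta$ have commensurators stabilizing the associated flat, which project to subgroups bounded at the remaining places and hence are again discrete real linear groups falling under Theorem A. Controlling these commensurators uniformly --- in particular in the non-elliptic case --- is the technical heart of the assembly, but since only finiteness of the dimension is required, the resulting (possibly large) bound suffices.
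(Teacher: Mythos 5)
Your reduction to Theorem A hinges on the step you yourself flag as ``the main obstacle'': embedding $\Gamma$ discretely into $G_\infty\times G_f$, where $G_\infty$ is the \emph{given} archimedean representation and $G_f$ is a finite product of non-archimedean groups. This step is not merely hard, it is false in general. Take $\Gamma=\mathrm{SL}(2,\mathbb{Z}[\sqrt{2}])\leq \mathrm{GL}(2,\mathbb{C})$, which satisfies every hypothesis of Theorem B (entries are algebraic integers, so the characteristic polynomials are integral; finitely generated unipotent subgroups have Hirsch length at most $2$). Its unipotent subgroup $V=\left\{\left(\begin{smallmatrix}1 & a\\ 0 & 1\end{smallmatrix}\right) : a\in\mathbb{Z}[\sqrt{2}]\right\}\cong\mathbb{Z}^2$ is not discrete in $G_\infty$, since $\mathbb{Z}[\sqrt{2}]$ is dense in $\mathbb{R}$. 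If the maps $\Gamma\to\mathrm{GL}(n,K_v)$ are place-induced (which your discussion of valuations of characteristic roots presupposes), then the image of $V$ in $G_f$ is unipotent; in characteristic zero the logarithm identifies it with a finitely generated additive subgroup of some $\prod_v K_v^{d}$, and since $K_v^{d}$ modulo any compact open subgroup is torsion, $W:=V\cap\rho_f^{-1}(C)$ has \emph{finite index} in $V$ for every compact open subgroup $C\leq G_f$. Were $\Gamma$ discrete in $G_\infty\times G_f$, then $W$, lying in $G_\infty\times C$ with $C$ compact, would project to a discrete subgroup of $G_\infty$; but the $G_\infty$-image of any finite-index subgroup of $V$ contains all $\left(\begin{smallmatrix}1 & Ia\\ 0 & 1\end{smallmatrix}\right)$, $a\in\mathbb{Z}[\sqrt{2}]$, for some integer $I\geq 1$, which is again dense in a line --- a contradiction. (Ad hoc, non-place-induced homomorphisms do not rescue this: by superrigidity for the irreducible higher-rank lattice $\mathrm{SL}(2,\mathbb{Z}[\sqrt{2}])\leq\mathrm{SL}(2,\mathbb{R})^2$, every representation over a non-archimedean local field of characteristic zero has bounded image.) What discreteness actually requires here is the second real place $\sqrt{2}\mapsto-\sqrt{2}$; your insistence that the archimedean factor be the given embedding alone is exactly the unsalvageable point.

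The paper's proof avoids this trap by never attempting to embed $\Gamma$ itself discretely anywhere. Following Alperin--Shalen \cite{AS}, it first constructs an epimorphism $\rho:\Gamma\to H_1\times\dots\times H_r$ with \emph{unipotent kernel} $U$, where each $H_i$ acts irreducibly over the fraction field of the entry ring. Integral characteristic is then used to replace each $H_i$ by an isomorphic absolutely irreducible, integral-characteristic subgroup over a number field $L_i$, and such a group embeds discretely into $\mathrm{GL}(m_i,\mathbb{R})^{r_i}\times\mathrm{GL}(m_i,\mathbb{C})^{s_i}$ --- the product over \emph{all} archimedean places of $L_i$ (proof of Proposition~2.1 of \cite{AS}); Theorem A is applied to this purely archimedean discrete group, with no buildings at all. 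The kernel $U$ --- precisely the part your geometric approach cannot see, since unipotent elements have characteristic polynomial $(x-1)^n$ no matter how transcendental their entries are --- is handled algebraically rather than geometrically: the hypothesis on unipotent subgroups bounds $h(U)$, and Corollary~6.1 of \cite{DP} yields $\underline{\underline{\mathrm{gd}}}(\Gamma)\leq\underline{\underline{\mathrm{gd}}}(H_1\times\dots\times H_r)+h+3$. So the two ideas missing from your proposal are exactly the two pillars of the actual proof: enlarge the archimedean part using all archimedean places of a number field attached to the irreducible quotients, and split off the unipotent radical, treating it by an algebraic extension theorem instead of by an action on a complex.
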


\begin{corollary*}
Let $\mathbb F$ be an algebraic number field and suppose $\Gamma$ is a subgroup of  $\mathrm{GL}(n,\mathbb{F})$ of integral characteristic. 
Then $\Gamma$ admits a finite dimensional  model for $\underline{\underline{E}}\Gamma$.
\end{corollary*}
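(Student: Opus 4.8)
The plan is to derive this Corollary from Theorem B by showing that a subgroup $\Gamma$ of $\mathrm{GL}(n,\mathbb{F})$ of integral characteristic, where $\mathbb{F}$ is an algebraic number field, satisfies the two hypotheses of Theorem B: namely that $\Gamma$ is finitely generated (or can be reduced to the finitely generated case) and that there is a uniform bound on the Hirsch lengths of its finitely generated unipotent subgroups. Since $\mathbb{F}$ is already a subfield of $\mathbb{C}$, the integral-characteristic hypothesis transfers directly, so the only real work is verifying the unipotent Hirsch-length bound.

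\smallskip

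First I would address the Hirsch-length bound. A finitely generated unipotent subgroup $U$ of $\mathrm{GL}(n,\mathbb{F})$ is nilpotent, and since every element is unipotent, $U$ is conjugate (over the algebraic closure) into the upper unitriangular group $\mathrm{U}(n)$ by the Lie--Kolchin theorem. The upper unitriangular group $\mathrm{U}(n,\mathbb{C})$ is a nilpotent Lie group whose underlying manifold has dimension $\binom{n}{2}$, and its discrete (or finitely generated) subgroups have Hirsch length at most $\binom{n}{2}$. Thus \emph{every} finitely generated unipotent subgroup of $\mathrm{GL}(n,\mathbb{F})$ has Hirsch length bounded by $\binom{n}{2}$, which is a bound depending only on $n$ and not on $\Gamma$. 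This establishes the second hypothesis of Theorem B unconditionally for any subgroup of $\mathrm{GL}(n,\mathbb{F})$, and in fact for any subgroup of $\mathrm{GL}(n,\mathbb{C})$.

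\smallskip

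Next I would handle the finite-generation hypothesis, which Theorem B requires but the Corollary does not assume. The natural device is a direct-limit argument: write $\Gamma$ as the directed union of its finitely generated subgroups $\Gamma_\alpha$. Each $\Gamma_\alpha$ is a finitely generated subgroup of $\mathrm{GL}(n,\mathbb{F})$ of integral characteristic (integral characteristic is inherited by subgroups since it is a condition on individual elements), and by the previous paragraph each satisfies the unipotent bound, so Theorem B gives each $\Gamma_\alpha$ a finite-dimensional model for $\underline{\underline{E}}\Gamma_\alpha$. The dimensions obtained are uniformly bounded: the bound coming from Theorem A is controlled by the dimension $m$ of the Zariski closure, which is at most $n^2$ (the dimension of the ambient $\mathrm{GL}(n,\mathbb{C})$), so $\underline{\underline{\mathrm{gd}}}(\Gamma_\alpha) \leq n^2 + 1$ uniformly. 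One then invokes a standard result (as in the work of L\"uck--Weiermann) that a countable group which is the union of a directed system of subgroups each admitting finite-dimensional models for $\underline{\underline{E}}(-)$ of uniformly bounded dimension itself admits a finite-dimensional model for $\underline{\underline{E}}\Gamma$.

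\smallskip

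The main obstacle I expect is the passage from the finitely generated subgroups back to $\Gamma$: the colimit statement for classifying spaces with virtually cyclic stabilizers is more delicate than for finite stabilizers, because the family of virtually cyclic subgroups does not behave as simply under directed unions, and one must ensure that the uniform dimension bound genuinely yields a finite-dimensional model for the whole group rather than merely a sequence of models of growing complexity. Care is needed to confirm that the relevant colimit/Bredon-cohomological dimension argument applies in the virtually cyclic setting and that the integral-characteristic property, which is what makes Theorem A and Theorem B applicable, is preserved at every stage.
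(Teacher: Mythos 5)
Your argument has a genuine error at its first and most important step: the claim that every finitely generated unipotent subgroup of $\mathrm{GL}(n,\mathbb{C})$ (or of $\mathrm{GL}(n,\mathbb{F})$) has Hirsch length at most $\binom{n}{2}$. Finitely generated subgroups of a Lie group need not be discrete, so they do not inherit the dimension bound that discrete subgroups satisfy. Concretely, pick $\alpha_1,\dots,\alpha_k\in\mathbb{C}$ linearly independent over $\mathbb{Q}$ and let $U_k$ be the group of matrices $\left(\begin{smallmatrix}1&a\\0&1\end{smallmatrix}\right)$ with $a\in\mathbb{Z}\alpha_1+\cdots+\mathbb{Z}\alpha_k$; this is a finitely generated unipotent subgroup of $\mathrm{GL}(2,\mathbb{C})$ with Hirsch length $k$, while $\binom{2}{2}=1$. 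So there is no bound depending only on $n$, and your parenthetical claim that the hypothesis of Theorem B holds ``in fact for any subgroup of $\mathrm{GL}(n,\mathbb{C})$'' cannot be right --- that hypothesis appears in Theorem B precisely because it can fail. Even over a number field your stated bound is wrong: for $\mathbb{F}=\mathbb{Q}(\sqrt{2})$ and $a\in\mathbb{Z}+\mathbb{Z}\sqrt{2}$ one gets Hirsch length $2>\binom{2}{2}$. What is true, and what the paper actually proves, is that $(\mathbb{F},+)$ is a finite direct sum of copies of $(\mathbb{Q},+)$, hence has Hirsch length $[\mathbb{F}:\mathbb{Q}]$; running up the upper central series of the unitriangular group $\mathrm{Tr}(n,\mathbb{F})$ gives $h(\mathrm{Tr}(n,\mathbb{F}))=[\mathbb{F}:\mathbb{Q}]\binom{n}{2}<\infty$, and every unipotent subgroup of $\mathrm{GL}(n,\mathbb{F})$ is conjugate into $\mathrm{Tr}(n,\mathbb{F})$ by Kolchin's theorem. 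The finiteness of Hirsch length here depends essentially on $\mathbb{F}$ being a number field, with a bound involving $[\mathbb{F}:\mathbb{Q}]$ and not just $n$.

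Your second step, the reduction to finitely generated subgroups by a directed colimit, is both insufficiently justified and unnecessary. The uniform bound $\underline{\underline{\mathrm{gd}}}(\Gamma_\alpha)\leq n^2+1$ does not follow from Theorem A, because Theorem A applies to \emph{discrete} subgroups of $\mathrm{GL}(n,\mathbb{R})$, and a finitely generated subgroup of $\mathrm{GL}(n,\mathbb{F})$ is in general not discrete in $\mathrm{GL}(n,\mathbb{C})$. The bound Theorem B actually produces is $\underline{\underline{\mathrm{gd}}}(H_1\times\cdots\times H_r)+h+3$, where the first term is governed by the Alperin--Shalen discrete embeddings of the irreducible quotients and $h$ is the Hirsch length of the unipotent kernel, so uniformity over all $\Gamma_\alpha$ would itself require an argument; and the directed-union theorem for the virtually cyclic family that you then invoke is, as you yourself note, exactly the delicate point. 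The paper avoids all of this. Finite generation enters Theorem B only to reduce the coefficient field to number fields; since $\mathbb{F}$ is already a number field, the paper simply reruns the proof of Theorem B for $\Gamma$ itself, with no finite generation assumed: write $1\to U\to\Gamma\to H_1\times\cdots\times H_r\to 1$ with $U$ unipotent and each $H_i$ of integral characteristic acting irreducibly, embed each $H_i$ as a discrete subgroup of $\mathrm{GL}(m_i,\mathbb{R})^{r_i}\times\mathrm{GL}(m_i,\mathbb{C})^{s_i}$ following Alperin--Shalen, apply Theorem A to the product, and conclude with Corollary 6.1 of the cited Degrijse--Petrosyan paper, using that $h(U)\leq h(\mathrm{Tr}(n,\mathbb{F}))<\infty$ as above. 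To repair your write-up you should replace your Hirsch-length argument by the $\mathrm{Tr}(n,\mathbb{F})$ argument and drop the colimit reduction in favor of this direct one.
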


\noindent Theorems A and B and the Corollary will be proven in Section 4.\\

The $K$-theoretical Farrell--Jones conjecture (e.g. see \cite{DL, LuckReich}) predicts that for a group $\Gamma$ and a ring $R$, the assembly map
\[     \mathcal{H}^{\G}_n(\underline{\underline{E}}\Gamma;\mathbf{K}_R) \rightarrow   \mathcal{H}^\G_n(\{\ast\};\mathbf{K})=\mathrm{K}_n(R[\G])  \]
is an isomorphism for every $n \in \mathbb{Z}$. Here $\mathrm{K}_{\ast}(R[\Gamma])$ is the algebraic $K$-theory of the group ring $R[\Gamma]$ and $ \mathcal{H}^{\G}_*(-;\mathbf{K}_R) $ is a generalized equivariant homology theory defined using the $K$-theory spectrum $\mathbf{K}_R$. This conjecture has been proven for many important classes of groups (and rings), including $\mathrm{SL}(n,\mathbb{Z})$ when $R$ is finitely generated as an abelian group (see \cite{BartelsLuckReichRueping}). 

Using the universal property of classifying spaces for families, one can construct a $\Gamma$-equivariant inclusion of $\underline{E}\Gamma$ into $\underline{\underline{E}}\Gamma$. By a result of Bartels (see \cite[Th. 1.3.]{Bartels}) this inclusion induces a split injection $\mathcal{H}^{\G}_n(\underline{E}\Gamma;\mathbf{K}_R) \rightarrow \mathcal{H}^{\G}_n(\underline{\underline{E}}\Gamma;\mathbf{K}_R)$. Hence, there is an isomorphism

\[   \mathcal{H}^{\G}_n(\underline{\underline{E}}\Gamma;\mathbf{K}_R)\cong  \mathcal{H}^{\G}_n(\underline{E}\Gamma;\mathbf{K}_R)\oplus \mathcal{H}^\Gamma_n(\underline{\underline{E}}\G,\underline{E}\G;\mathbf{K}_R). \]

If $\Gamma=\mathrm{SL}(3,\mathbb{Z})$, the term $\mathcal{H}^\Gamma_n(\underline{E}\G;\mathbf{K}_R)$ can be computed using a 3-dimensional cocompact model for $\underline{E}\G$ constructed by Soul\'{e} (see \cite{Soule}).  In Theorem, \ref{kth} we describe the term $\mathcal{H}^\Gamma_n(\underline{\underline{E}}\G,\underline{E}\G;\mathbf{K}_R)$ using a $4$-dimensional model for $\underline{\underline{E}}\G$ we construct in Section 5.

\section{A push-out construction}
A general method to obtain a model for $\underline{\underline{E}}\G$ from a model for $\underline{E}\G$ is given by L\"{u}ck and Weiermann in \cite[$\S 2$]{LuckWeiermann}. We will briefly recall this method.

Let $\Gamma$ be a discrete group and consider the set $\mathcal{S}$ of all infinite virtually cyclic subgroups of $\Gamma$. Two infinite virtually cyclic subgroup of $\Gamma$ are said to be equivalent if they have infinite intersection in $\Gamma$. One easily verifies that this defines an equivalence relation on $\mathcal{S}$. If $H \in \mathcal{S}$, then its equivalence class will be denoted by $[H]$. The set of all equivalence classes of elements of $\mathcal{S}$ will be denoted by $[\mathcal{S}]$. Note that the conjugation action of $\Gamma$ on $\mathcal{S}$ passes to $[\mathcal{S}]$. The stabilizer of an $[H] \in \mathcal{S}$ under this action is the subgroup 
\[ \mathrm{N}_{\Gamma}[H]=\{g \in \Gamma \ | \ |H\cap H^{g}|=\infty \} \]
of $\Gamma$. By definition, $\mathrm{N}_{\Gamma}[H]$ only depends on the equivalence class of $[H]$ of $H$. We may therefore always assume that $[H]$ is represented by an infinite cyclic group $H=\langle t \rangle$. Hence, one can write
\[ \mathrm{N}_{\Gamma}[H]=\{g \in \Gamma \ | \ \exists n,m \in \mathbb{Z}\smallsetminus \{0 \}: g^{-1}t^ng=t^m \}.\]
This group is called the commensurator of $H$ in $\Gamma$. Some references, e.g.\cite{DP}, actually denote this group by $\mathrm{Comm}_{\Gamma}[H]$ instead of $\mathrm{N}_{\Gamma}[H]$. Note that $\mathrm{N}_{\Gamma}[H]$ always contains $H$ as a subgroup. 

Let  $\mathcal{I}$ be a complete set of representatives $[H]$ of the orbits of the conjugation action of $\Gamma$ on $[\mathcal{S}]$.
For each $[H] \in \mathcal{I}$,  let $\mathcal{F}[H]$ be the family of subgroups of $\mathrm{N}_{\Gamma}[H]$  containing all finite subgroup of $N_\Gamma[H]$ and all infinite virtually cyclic subgroup of $N_\Gamma[H]$ that are equivalent to $H$.
\begin{theorem}[{\cite[Theorem 2.3]{LuckWeiermann}}]\label{th: push out}  Let 
\[  \xymatrix{ \coprod_{[H] \in \mathcal{I}} \Gamma \times_{\mathrm{N}_{\Gamma}[H]}\underline{E}\mathrm{N}_{\Gamma}[H] \ar[r]^{\ \ \ \ \ \ \ \ \ \ \ \ \  i} \ar[d]_{\coprod_{[H] \in \mathcal{I}} \mathrm{id}_\Gamma \times f_{[H]}} & \underline{E}\Gamma  \ar[d] \\
              \coprod_{[H] \in \mathcal{I}}  \Gamma \times_{\mathrm{N}_{\Gamma}[H]}E_{\mathcal{F}[H]}\mathrm{N}_{\Gamma}[H] \ar[r] & Y } \]
be a $\Gamma$-equivariant push-out diagram of $\Gamma$-$CW$-complexes such that for each $[H] \in \mathcal{I}$, the map
 $f_{[H]}$ is cellular and $\mathrm{N}_{\Gamma}[H]$-equivariant and $i$ is a cellular inclusion of $\Gamma$-$CW$-complexes. Then the push-out $Y$ is a model for $\underline{\underline{E}}\Gamma$.
\end{theorem}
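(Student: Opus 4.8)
The plan is to verify directly that $Y$ is a $\Gamma$-CW-complex whose isotropy groups are virtually cyclic and whose $K$-fixed point set is contractible for every virtually cyclic $K\leq\Gamma$; these two properties characterize a model for $\underline{\underline{E}}\Gamma$. That $Y$ is a $\Gamma$-CW-complex is automatic, since it is a push-out of $\Gamma$-CW-complexes along a cellular $\Gamma$-inclusion $i$ and cellular equivariant maps $f_{[H]}$. The essential device is that, because $i$ is a cellular inclusion and hence an equivariant cofibration, passing to $K$-fixed points commutes with the push-out: for every $K\leq\Gamma$ one obtains a push-out square with corners $A^K\to(\underline{E}\Gamma)^K$, $A^K\to B^K$ and pushout $Y^K$, where I abbreviate $A=\coprod_{[H]}\Gamma\times_{\mathrm{N}_\Gamma[H]}\underline{E}\mathrm{N}_\Gamma[H]$ and $B=\coprod_{[H]}\Gamma\times_{\mathrm{N}_\Gamma[H]}E_{\mathcal{F}[H]}\mathrm{N}_\Gamma[H]$. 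Everything then reduces to computing these three fixed-point sets.

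The computation rests on the elementary formula for fixed points of an induced complex: writing $N=\mathrm{N}_\Gamma[H]$, one has $(\Gamma\times_N Z)^K=\coprod_{gN}Z^{g^{-1}Kg}$, the disjoint union being over those cosets $gN$ with $g^{-1}Kg\subseteq N$, with the $gN$-summand homeomorphic to $Z^{g^{-1}Kg}$ via $z\mapsto[g,z]$. I would first record the stabilizer statement: each point of $Y$ lies in the image of $\underline{E}\Gamma$, where isotropy is finite, or in the image of $B$, where isotropy lies in some $\mathcal{F}[H]$ and is therefore finite or infinite virtually cyclic; hence all isotropy groups of $Y$ are virtually cyclic and $Y^K=\emptyset$ whenever $K$ is not virtually cyclic.

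For $K$ finite, $(\underline{E}\Gamma)^K$ is contractible, and in the fixed-point formula every occurring $g^{-1}Kg$ is finite, so each summand $(\underline{E}N)^{g^{-1}Kg}$ and $(E_{\mathcal{F}[H]}N)^{g^{-1}Kg}$ is contractible. The map $A^K\to B^K$ induced by the $f_{[H]}$ therefore matches up identically indexed disjoint unions of contractible spaces and is a homotopy equivalence. Pushing a homotopy equivalence out along the cofibration $A^K\hookrightarrow(\underline{E}\Gamma)^K$ yields a homotopy equivalence $(\underline{E}\Gamma)^K\xrightarrow{\simeq}Y^K$, so $Y^K$ is contractible.

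For $K$ infinite virtually cyclic the analysis is the crux. Here $(\underline{E}\Gamma)^K=\emptyset$, which forces $A^K=\emptyset$, so $Y^K=B^K$. Such a $K$ determines a single class $[K]\in[\mathcal{S}]$, and by the choice of representatives $\mathcal{I}$ there is exactly one $[H]\in\mathcal{I}$ whose $\Gamma$-orbit contains $[K]$; since $\mathrm{N}_\Gamma[H]$ is precisely the stabilizer of $[H]$ for the conjugation action on $[\mathcal{S}]$, the set of $g$ with $g^{-1}Kg$ equivalent to $H$ is a single coset $g_0N$. All other cosets $gN$, and all other representatives $[H']$, yield an infinite virtually cyclic $g^{-1}Kg$ that is not equivalent to the relevant generator and hence lies outside the family, contributing an empty summand; the coset $g_0N$ contributes the single contractible summand $(E_{\mathcal{F}[H]}N)^{g_0^{-1}Kg_0}$, because $g_0^{-1}Kg_0\in\mathcal{F}[H]$. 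Thus $B^K$ is contractible and so is $Y^K$. The main obstacle is precisely this last bookkeeping: one must show that an infinite virtually cyclic $K$ picks out exactly one coset for exactly one representative $[H]$, neither none nor several, and this is where the role of $\mathrm{N}_\Gamma[H]$ as the commensurator, i.e. the stabilizer of $[H]$, is indispensable.
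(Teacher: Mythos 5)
Your proof is correct, but note that the paper itself contains no argument for this statement at all: it is imported verbatim as Theorem~2.3 of L\"uck--Weiermann \cite{LuckWeiermann}, so the only meaningful comparison is with the proof in that reference. Your verification is essentially that original proof: L\"uck and Weiermann also check the fixed-point criterion for a classifying space, using that $K$-fixed points commute with push-outs along equivariant cellular inclusions, the formula $(\Gamma\times_{\mathrm{N}_\Gamma[H]} Z)^K\cong\coprod_{g\mathrm{N}_\Gamma[H]}Z^{g^{-1}Kg}$ for induced spaces, and the same case split between finite $K$ and infinite virtually cyclic $K$. You also correctly isolate and resolve the one genuinely delicate point, namely that an infinite virtually cyclic $K$ picks out exactly one representative $[H]\in\mathcal{I}$ (because $\mathcal{I}$ is a set of orbit representatives for the conjugation action on equivalence classes) and exactly one coset $g_0\mathrm{N}_\Gamma[H]$ (because $\mathrm{N}_\Gamma[H]$ is the stabilizer of $[H]$ under that action, so the transporter $\{g : [g^{-1}Kg]=[H]\}$ is a single left coset), with $g_0^{-1}Kg_0\in\mathcal{F}[H]$ giving the unique, contractible summand of $Y^K=B^K$; this is precisely where the definitions of the commensurator and of the family $\mathcal{F}[H]$ do their work in the original argument as well.
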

Using {\cite[Remark 2.5]{LuckWeiermann}}, one arrives at the following corollary.
\begin{corollary}[{\cite[Remark 2.5]{LuckWeiermann}}] \label{cor: push out} If there exists a natural number $d$ such that for each $[H] \in \mathcal{I}$
\begin{itemize}
\item[-] $\underline{\mathrm{gd}}(\mathrm{N}_{\Gamma}[H]) \leq d-1$,
\item[-] $\mathrm{gd}_{\mathcal{F}[H]}(\mathrm{N}_{\Gamma}[H]) \leq d$,
\end{itemize}
and such that $\underline{\mathrm{gd}}(\Gamma) \leq d$, then $\underline{\underline{\mathrm{gd}}}(\Gamma) \leq d$.
\end{corollary}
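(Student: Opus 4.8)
The plan is to realize the two displayed hypotheses as dimension bounds on the corners of a push-out diagram of exactly the shape demanded by Theorem \ref{th: push out}, and then to read off the dimension of the resulting model $Y$ for $\underline{\underline{E}}\Gamma$. Two elementary principles govern the bookkeeping: inducing a $\mathrm{N}_{\Gamma}[H]$-CW-complex up along $\Gamma \times_{\mathrm{N}_{\Gamma}[H]}(-)$ preserves its dimension, and in a push-out of $\Gamma$-CW-complexes along a cellular inclusion the dimension of the push-out is at most the maximum of the dimensions of the other three corners.

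First I would assemble the left-hand vertical map and the lower-left corner. For each $[H] \in \mathcal{I}$, the hypotheses supply a model for $\underline{E}\mathrm{N}_{\Gamma}[H]$ of dimension at most $d-1$ and a model for $E_{\mathcal{F}[H]}\mathrm{N}_{\Gamma}[H]$ of dimension at most $d$. Since $\mathcal{F}[H]$ contains every finite subgroup of $\mathrm{N}_{\Gamma}[H]$, the universal property of classifying spaces yields an $\mathrm{N}_{\Gamma}[H]$-map $\underline{E}\mathrm{N}_{\Gamma}[H] \to E_{\mathcal{F}[H]}\mathrm{N}_{\Gamma}[H]$, which I would make cellular by equivariant cellular approximation and call $f_{[H]}$. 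Inducing along $\Gamma \times_{\mathrm{N}_{\Gamma}[H]}(-)$ and taking the coproduct over $\mathcal{I}$ gives the vertical map, whose source has dimension at most $d-1$ and whose target has dimension at most $d$.

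Next I would build the horizontal map $i$. Fix a model for $\underline{E}\Gamma$ of dimension at most $d$, which exists because $\underline{\mathrm{gd}}(\Gamma) \leq d$; its restriction to each $\mathrm{N}_{\Gamma}[H]$ is again a model for $\underline{E}\mathrm{N}_{\Gamma}[H]$, so the universal property produces a $\Gamma$-map from the upper-left corner into $\underline{E}\Gamma$. To convert this into the required \emph{cellular inclusion} I would replace $\underline{E}\Gamma$ by the equivariant mapping cylinder of this map: the cylinder is still a model for $\underline{E}\Gamma$ (it equivariantly deformation retracts onto $\underline{E}\Gamma$, and the relevant fixed-point sets are unchanged up to homotopy), it contains the upper-left corner as a $\Gamma$-CW-subcomplex, and since that corner has dimension at most $d-1$ the cylinder has dimension at most $\max\{(d-1)+1,\, d\} = d$. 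With $i$ now a cellular inclusion and every $f_{[H]}$ cellular and $\mathrm{N}_{\Gamma}[H]$-equivariant, Theorem \ref{th: push out} applies, so the push-out $Y$ is a model for $\underline{\underline{E}}\Gamma$.

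Finally I would bound $\dim Y$: as $i$ is a cellular inclusion, $Y$ is obtained from the upper-right corner (dimension $\leq d$) by attaching those cells of the lower-left corner (dimension $\leq d$) that lie outside the upper-left corner, whence $\dim Y \leq d$ and therefore $\underline{\underline{\mathrm{gd}}}(\Gamma) \leq d$. The one place needing genuine care, and the likely main obstacle, is the dimension accounting in the mapping-cylinder replacement: one must check that forcing $i$ to be an inclusion does not raise the upper-right corner above dimension $d$. This is precisely why the hypothesis on $\underline{\mathrm{gd}}(\mathrm{N}_{\Gamma}[H])$ is sharpened to $d-1$ rather than $d$, since the cylinder over a $(d-1)$-dimensional source is exactly what stays within dimension $d$; everything else reduces to the universal property together with equivariant cellular approximation.
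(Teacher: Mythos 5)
Your proof is correct and is essentially the argument that the paper outsources to \cite[Remark 2.5]{LuckWeiermann}: choose models of the stated dimensions, turn the map from $\coprod_{[H]} \Gamma \times_{\mathrm{N}_{\Gamma}[H]}\underline{E}\mathrm{N}_{\Gamma}[H]$ into a cellular inclusion via the equivariant mapping cylinder, and apply Theorem \ref{th: push out}, with the hypothesis $\underline{\mathrm{gd}}(\mathrm{N}_{\Gamma}[H]) \leq d-1$ absorbing the dimension shift caused by the cylinder. Your identification of that shift as the reason for the $d-1$ is exactly the point of the cited remark, so there is nothing to add.
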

\section{On the structure of $N_{\G}[H]$ in linear groups}
We recall that a {\it real algebraic group} is the set of real points of a linear algebraic group defined over $\mathbb{R}$. Throughout this section, we will use some basic facts about (real) algebraic groups for which we refer to \cite{Borel}.
For  any subgroup $K$ of $\mathrm{GL}(n,\mathbb{R})$, we will denote the Zariski closure of $K$ in $\mathrm{GL}(n,\mathbb{R})$ by $\overline{K}$.  The notions ``connected'' and ``discrete'' will refer to the Hausdorff topology and not to the Zariski topology.

The following result was kindly communicated to us by Herbert Abels. 

\begin{proposition}[H.~Abels] \label{abels} Let $G$ be a real algebraic group and suppose $R$ is its algebraic radical. Suppose $\G$ is a discrete subgroup of $G$ such that the $\pi(\G)$ is Zariski dense in $G/R$, where $\pi: G\to G/R$ is the natural quotient map.  Then $\pi(\G)$ is discrete.
\end{proposition}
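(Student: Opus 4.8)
The plan is to analyze the Hausdorff closure $H := \overline{\pi(\Gamma)}$ of $\pi(\Gamma)$ inside the semisimple Lie group $S := G/R$ and to prove that its identity component $H^{0}$ is trivial. Since $S$ is a Lie group and $H$ is a closed subgroup, $H^{0}$ is open in $H$; hence $H^{0}=\{e\}$ forces $H$ to be discrete, and therefore $\pi(\Gamma)\subseteq H$ to be discrete as well. So the whole statement reduces to the single claim that $H^{0}$ is trivial.

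The first real step is to show that $H^{0}$ is a normal, \emph{semisimple} subgroup of $S$, and this is where the Zariski density hypothesis enters decisively. Being the identity component, $H^{0}$ is normal in $H$, so every element of $\pi(\Gamma)\subseteq H$ normalizes $H^{0}$ and thus preserves its Lie algebra $\mathrm{Lie}(H^{0})$ under the adjoint action. The stabilizer of $\mathrm{Lie}(H^{0})$ is a Zariski closed subgroup of $S$; as it contains the Zariski dense subgroup $\pi(\Gamma)$, it must be all of $S$. Therefore $\mathrm{Lie}(H^{0})$ is an ideal of the semisimple Lie algebra of $S$, so $H^{0}$ is a connected normal subgroup of $S$ and is itself semisimple.

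It then remains to rule out a nontrivial $H^{0}$. Passing to $W := \pi^{-1}(H^{0})$, which is a connected subgroup of $G$ with radical $R$ and with $W/R\cong H^{0}$, the subgroup $\Gamma_{0}:=\Gamma\cap W$ is discrete in $W$; and since $\pi(\Gamma)\cap H^{0}$ is dense in $H^{0}$ (a dense set meets the open subgroup $H^{0}$ in a dense subset), the image $\pi(\Gamma_{0})$ is dense, hence Zariski dense, in $H^{0}$. We are thus reduced to proving that a discrete subgroup of a connected Lie group cannot project to a dense, non-discrete subgroup of a nontrivial semisimple quotient by the radical. This is exactly the content of Auslander's theorem in the classical setting where $\Gamma$ is a lattice, with the crucial difference that there the required Zariski density of the projection is furnished for free by the Borel density theorem. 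I would therefore run Auslander's analysis of the interaction of $\Gamma_{0}$ with the solvable radical $R$ (through the adjoint action of $H^{0}$ on $\mathrm{Lie}(R)$ together with a Zassenhaus neighbourhood argument) to conclude that the identity component of $\overline{\Gamma_{0}}$, and hence $H^{0}$, must be solvable. Being simultaneously semisimple and solvable, $H^{0}$ is trivial, and the proof is complete.

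The main obstacle is precisely this last step: deducing discreteness of $\pi(\Gamma)$ from discreteness of $\Gamma$ without the finite-covolume assumption of the classical Auslander theorem. The key point to verify is that the Zariski density assumption is the correct substitute for Borel density, so that the lattice argument carries over once one checks that it uses only Zariski density of the image in $S$ and never finite covolume.
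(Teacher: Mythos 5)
Your outline follows essentially the same route as the paper's (very short) proof: the paper sets $C$ equal to the identity component of the Hausdorff closure of $\pi(\Gamma)$, gets solvability of $C$ from Corollary~1.3 of Abels' preprint \emph{On a theorem of Auslander}, gets normality of $C$ in $G/R$ from Zariski density of $\pi(\Gamma)$ exactly as you do, and concludes because a connected solvable normal subgroup of a semisimple group is trivial. Your rearrangement (normal connected subgroup of a semisimple group is semisimple, then semisimple plus solvable implies trivial) is the same argument in different order, and your normality step via the stabilizer of $\mathrm{Lie}(H^{0})$ being Zariski closed is correct and is precisely the paper's step.

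The place where your write-up falls short of a proof is the step you yourself flag: the solvability of $H^{0}$. You propose to ``run Auslander's analysis'' rather than prove or cite it, and you frame the obstacle as the absence of finite covolume. That worry is misplaced: Auslander's theorem in the form given in Raghunathan's book (Theorem~8.24) already applies to an \emph{arbitrary} discrete subgroup of a Lie group with a closed connected normal solvable subgroup --- no lattice hypothesis and no Borel density theorem enter the solvability statement. Zariski density is needed only for the normality step, which you handle separately and correctly; so your concern that density must ``substitute'' for covolume inside the Auslander argument conflates the two halves of the proof. The genuine technical point your sketch glosses over is instead connectedness: $G$ is only a real algebraic group, so $R$, and hence your $W=\pi^{-1}(H^{0})$, need not be Hausdorff connected (your assertion that $W$ is a connected subgroup is unjustified), and the textbook Auslander theorem is stated for a \emph{connected} normal solvable subgroup. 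This is exactly the gap that the paper closes by invoking Abels' version of the theorem. So the skeleton of your proof is right and matches the paper, but the crux must be discharged by citing (or reproving) an Auslander-type theorem valid for possibly disconnected real algebraic groups, and your reduction to $W$ and $\Gamma_{0}$ is an unnecessary detour: the solvability statement can be applied directly to $\Gamma$, $G$ and $R$, which is what the paper does.
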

\begin{proof} Denote by $C$ the identity component of the closure of the group $\pi(\G)$ in $G/R$ in the Hausdorff topology. We need to show that $C$ is trivial. By Corollary 1.3 of \cite{Abels}, it is solvable. Since $C$ is normalised by $\pi(\G)$, it is also normalised by its Zariski closure $G/R$. We obtain that $C$ is a (Hausdorff and hence Zariski) connected solvable normal subgroup of the semisimple group $G/R$ and hence it is trivial.
\end{proof}

Now, let us assume that $\G$ is a discrete subgroup of $\mathrm{GL}(n,\mathbb{R})$ and let $[H]$ be an equivalence class of infinite virtually cyclic subgroups of $\Gamma$.
\begin{lemma}\label{lemma: norm} There is a representative $H \in [H]$ such that $\overline{H}$ is a Zariski connected abelian normal subgroup of $\overline{N_\G[H]} \leq \mathrm{GL}(n,\mathbb{R})$ and $N_\G[H]=N_\G(H)$, the normaliser of $H$ in $\Gamma$.

\end{lemma}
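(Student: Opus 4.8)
The plan is to produce the required representative in two moves: first arrange that the Zariski closure is connected abelian, and then enlarge $H$ within its commensurability class so that it absorbs the conjugation action of the commensurator. Throughout write $L=N_\G[H]$, and recall that $L$ depends only on the class $[H]$.

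First I would reduce to the cyclic connected case. Since $H$ is infinite virtually cyclic, it contains a finite-index infinite cyclic subgroup $\langle t_0\rangle$, which still lies in $[H]$ because it meets $H$ in an infinite group. The Zariski closure of any cyclic group is abelian (commutativity is a Zariski-closed condition), so $\overline{\langle t_0\rangle}$ is an abelian algebraic group; choosing $N$ with $t_0^{N}$ in its identity component and using that a finite-index subgroup of $\langle t_0\rangle$ has a Zariski closure of the same dimension, one gets that $\overline{\langle t_0^{N}\rangle}$ is exactly that identity component, hence Zariski connected. Replacing $H$ by $\langle t\rangle$ with $t=t_0^{N}$, I may assume $A:=\overline{H}$ is a Zariski connected abelian subgroup of $\mathrm{GL}(n,\mathbb{R})$.

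Next I would show $L$ normalises $A$. For $g\in L$ the group $H\cap H^{g}$ is infinite, hence of finite index in both infinite cyclic groups $H$ and $H^{g}=g^{-1}Hg$. Because $A$ is connected, a finite-index subgroup is Zariski dense in it, so $\overline{H\cap H^{g}}=\overline{H}=A$; applying the same to $H^{g}$, whose closure $g^{-1}Ag$ is again connected, gives $\overline{H\cap H^{g}}=g^{-1}Ag$. Thus $g^{-1}Ag=A$ for every $g\in L$, so $L$ lies in the normaliser of $A$ in $\mathrm{GL}(n,\mathbb{R})$. Since that normaliser is Zariski closed, it contains $\overline{L}=\overline{N_\G[H]}$, so $A$ is a connected abelian normal subgroup of $\overline{N_\G[H]}$.

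Finally, to force $N_\G[H]=N_\G(H)$ I would replace $H$ by the subgroup $B=\langle g^{-1}tg \mid g\in L\rangle\le A\cap\G$ generated by the $L$-orbit of $t$. It is abelian (a subgroup of $A$), invariant under $L$ by reindexing the generators, and contains $t$, so $\overline{B}=A$. The crucial point is that $B$ is still virtually cyclic: each generator $s=g^{-1}tg$ satisfies $s^{n}=t^{m}$ for some nonzero $n,m$ by the definition of $L$, so modulo torsion every generator is a rational multiple of the image of $t$ and $B$ has torsion-free rank at most one; being discrete in the real points of the connected abelian group $A$, the group $B$ is finitely generated and virtually free abelian, hence virtually cyclic (and infinite, as $t\in B$). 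Therefore $B$ is an infinite virtually cyclic member of $[H]$, whence $N_\G[B]=N_\G[H]=L$, and since $L$ normalises $B$ we get $L\subseteq N_\G(B)\subseteq N_\G[B]=L$. Renaming $B$ as $H$ gives a representative with $\overline{H}=A$ Zariski connected abelian and normal in $\overline{N_\G[H]}$ and with $N_\G[H]=N_\G(H)$, as required. I expect the main obstacle to be this last step, specifically checking that the conjugation-invariant enlargement $B$ remains virtually cyclic: here one must combine the commensurability relations $s^{n}=t^{m}$ with the discreteness of $\G$ to bound the rank, whereas the normality statement of the previous paragraph is comparatively formal.
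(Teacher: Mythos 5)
Your proof is correct, and its first two steps coincide with the paper's: the paper also passes to a finite-index (cyclic) subgroup so that $\overline{H}$ becomes Zariski connected and abelian, and then uses the same finite-index/equal-dimension argument to show $N_\Gamma[H]$, hence $\overline{N_\Gamma[H]}$, normalises $\overline{H}$. Where you genuinely diverge is the last step, arranging $N_\Gamma[H]=N_\Gamma(H)$. The paper \emph{shrinks} the representative: it notes that $\overline{H}\cap N_\Gamma[H]$ is a normal abelian subgroup of $N_\Gamma[H]$, finitely generated because it is discrete in the abelian Lie group $\overline{H}$ (Raghunathan), passes to a finite-rank free abelian subgroup $A$ normal in $N_\Gamma[H]$ containing a finite-index copy of $H$, and then picks a finite-index infinite cyclic overgroup $H'\leq A$ of $H$ with a \emph{primitive} generator $h$; for $g\in N_\Gamma[H]$ the commensuration relation gives nonzero integers $a,b$ with $a\varphi(h)=bh$ (where $\varphi$ is conjugation by $g$ restricted to $A$), and primitivity of both $h$ and $\varphi(h)$ forces $\varphi(h)=\pm h$, so $H'$, and hence its characteristic subgroup $H$, is normal. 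You instead \emph{enlarge} the representative: your $B$, generated by the whole conjugation orbit of $t$, is normal by construction, and the work shifts to proving $B$ remains infinite virtually cyclic, which you do correctly via the rank-at-most-one computation from the relations $s^n=t^m$ together with finite generation coming from discreteness in $\overline{H}$ (the same Raghunathan ingredient, noting that $\overline{H}$ may have finitely many Hausdorff components even though it is Zariski connected). So the two arguments use identical ingredients assembled in opposite directions: the paper's yields an infinite cyclic representative, slightly more convenient in later sections where $N_\Gamma[H]/H$ is formed; yours yields a representative of the form $F\oplus\mathbb{Z}$ with $F$ finite, but makes the normality statement automatic and is arguably more canonical, since $B$ is determined by $t$ and the class $[H]$ alone.
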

\begin{proof} Let $H \in [H]$. An algebraic group has only finitely many Zariski connected components. Up to passing to a finite-index subgroup of $H$ we may therefore assume that $\overline{H}$ is a Zariski connected algebraic group. Moreover, since $H$ is abelian, so is $\overline{H}$. Let $x \in N_{\Gamma}[H]$. By definition, $H^{x}\cap H$ is a finite index subgroup of $H$. This implies that $\overline{H}^{x}\cap \overline{H}$ is an algebraic subgroup of $\overline{H}$ of the same dimension. Because $\overline{H}$ is Zariski connected, we conclude that $\overline{H}^{x}=\overline{H}$. Since $N_{\Gamma}[H]$ normalizes $\overline{H}$, it follows that $\overline{H}$ is normal in $\overline{N_{\Gamma}[H]}$. From this we deduce that  $\overline{H}\cap N_{\Gamma}[H]$ is a normal abelian subgroup of $N_{\Gamma}[H]$. Since every discrete subgroup of a finite dimensional abelian Lie group is finitely generated (e.g. see \cite[Proposition 3.8]{Raghunathan}), the structure theorem of finitely generated abelian groups implies that up to passing to a finite-index subgroup, $H$ is contained in an finite rank free abelian  subgroup $A$ of $\overline{H}\cap N_{\Gamma}[H]$ that is normal in $N_{\Gamma}[H]$. But this implies that $H$ is also normal in $N_{\G}[H]$. Indeed, take $g \in N_G[H]$. Since $A$ is normal in $N_{\G}[H]$, conjugation by $g$ induces an automorphism $\varphi$ of $A$. Note that $H$ has a finite index infinite cyclic overgroup  $H'$ in $A$ that has a primitive generator $h$, meaning that $h$ is not a proper power of any other element in $A\cong \mathbb{Z}^r$. Because $g \in N_{\G}[H]=N_{\G}[H']$, there exists  $s, t \in \mathbb{Z}\smallsetminus \{0\}$ such that $s\varphi(h)=th$. Since $\varphi(h)$ is also a primitive element, it now follows that $s$ must divide $t$ and vice versa. Hence $s=\pm t$. It follows that $H'$ is normal in $N_\G[H]$ and so is $H$.
\end{proof}

We continue assuming that $\overline{H}$ is a Zariski connected abelian normal subgroup of $\overline{N_{\G}[H]}$ and $N_\G[H]=N_\G(H)$. Let $m$ be the dimension of $\overline{\G}$. 

Recall also that the notion of Hirsch length $h(S)\in \Z_{\geq 0}$ is defined for all virtually solvable groups $S$. The Hirsch length  is stable under passing to finite index subgroups. It behaves additively with respect to group extensions of virtually solvable groups and satisfies $h(\mathbb{Z})=1$. It also satisfies the relation $h(S)=\sup \{ h(S') \; | \; S' \mbox{ is a finitely generated subgroup of } S\}$. 

\begin{proposition} \label{prop: exact seq}There exists a short exact sequence
\[    1 \rightarrow N \rightarrow N_{\G}[H]/H \rightarrow Q \rightarrow 1   \]
where $Q$ is a discrete subgroup of a $k$-dimensional semisimple algebraic group and $N$ is a finitely generated solvable group of Hirsch length $h(N)\leq m-k-1$.

\end{proposition}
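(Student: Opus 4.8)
The plan is to realise the desired sequence as the restriction to $N_\G[H]$ of the quotient of its Zariski closure by the algebraic radical. Write $G=\overline{N_\G[H]}$ and let $R$ be the algebraic radical of $G$, so that $G/R$ is semisimple of some dimension $k$; denote by $\pi\colon G\to G/R$ the natural projection. By Lemma \ref{lemma: norm}, $\overline{H}$ is a Zariski connected abelian, hence solvable, normal subgroup of $G$, so it is contained in $R$; consequently $H\subseteq \overline{H}\cap N_\G[H]\subseteq R$. Since $N_\G[H]$ is Zariski dense in $G$ by construction, $\pi(N_\G[H])$ is Zariski dense in $G/R$; and since $\G$ is discrete, $N_\G[H]$ is a discrete subgroup of the Hausdorff-closed subgroup $G\leq\mathrm{GL}(n,\mathbb{R})$. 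Proposition \ref{abels} then applies directly and shows that $Q:=\pi(N_\G[H])$ is a discrete subgroup of the $k$-dimensional semisimple group $G/R$, which produces the right-hand term.

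The kernel of $\pi$ restricted to $N_\G[H]$ is $N_\G[H]\cap R$, giving a short exact sequence $1\to N_\G[H]\cap R\to N_\G[H]\to Q\to 1$. Because $H$ is normal in $N_\G[H]$ by Lemma \ref{lemma: norm} and $H\subseteq N_\G[H]\cap R$, dividing out $H$ yields $1\to N\to N_\G[H]/H\to Q\to 1$ with $N:=(N_\G[H]\cap R)/H$. Now $N_\G[H]\cap R$ is a subgroup of the solvable group $R$, hence solvable, so $N$ is solvable; and being a discrete subgroup of the solvable Lie group $R$ it is polycyclic, in particular finitely generated, whence so is its quotient $N$. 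This settles the solvability and finite generation of $N$.

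It remains to bound the Hirsch length. The sequence $1\to H\to N_\G[H]\cap R\to N\to 1$, together with additivity of $h$ and $h(H)=1$, gives $h(N)=h(N_\G[H]\cap R)-1$, so it suffices to prove $h(N_\G[H]\cap R)\leq m-k$. Here I would invoke that for a discrete subgroup $D$ of a solvable real algebraic group $R$ one has $h(D)\leq \dim R$; combined with $\dim R=\dim G-k\leq \dim\overline{\G}-k=m-k$ (using $N_\G[H]\leq\G$, hence $G\leq\overline{\G}$), this closes the argument. The main point to nail down is precisely this inequality $h(D)\leq\dim R$: one passes to the finite-index identity component of $R$ and, writing it up to isogeny as a unipotent group $U$ extended by a torus $T$, bounds the Hirsch length of $D\cap U$ by $\dim U$ and the free rank of the image of $D$ in $T$ by the number of non-compact factors of $T$, which is at most $\dim T$, since the compact directions of $T$ contribute only torsion to any discrete subgroup and are therefore invisible to $h$. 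I expect this structural estimate on $h(D)$ to be the only genuinely technical step, the remainder being bookkeeping of the two exact sequences and a single application of Proposition \ref{abels}.
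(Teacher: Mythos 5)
Your construction of the sequence itself coincides with the paper's: you take $R$ to be the algebraic radical of $G=\overline{N_\G[H]}$, apply Proposition \ref{abels} to get discreteness of $Q=\pi(N_\G[H])$ from Zariski density, and reduce the Hirsch length estimate, via additivity and $h(H)=1$, to the inequality $h(N_\G[H]\cap R)\leq \dim R\leq m-k$. Two small points in this part are actually improvements: your direct observation that $\overline{H}$, being Zariski connected, solvable and normal, lies in $R$ avoids the paper's passage to finite-index subgroups, and your explicit remark that $\dim R\leq m-k$ (rather than $=m-k$) because $G\leq\overline{\G}$ is more careful than the paper's wording. (For finite generation and solvability of $N_\G[H]\cap R$ you should note, as the paper does, that $R$ is only Hausdorff-connected up to finitely many components, so that Raghunathan's result for connected solvable Lie groups applies after passing to a finite-index subgroup.)

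The gap is at exactly the step you yourself flag as the technical heart, and your proposed argument for it does not work as stated. You bound the free rank of the image of $D=N_\G[H]\cap R$ in the torus $T=R^{\circ}/U$ by the number of non-compact factors of $T$, ``since the compact directions contribute only torsion to any discrete subgroup''. That reasoning presupposes that the image of $D$ in $T$ is a \emph{discrete} subgroup of $T$, which is neither established nor automatic: images of discrete subgroups under quotient homomorphisms are in general not discrete, and for non-discrete subgroups of a torus the rank bound is false outright --- $\mathbb{Z}^{n}$ embeds densely in $\mathbb{R}^{*}$ and in $\mathrm{SO}(2)$ for every $n$, so a non-discrete image in a one-dimensional torus could a priori have arbitrarily large free rank, and the additivity $h(D)=h(D\cap U)+h(\pi_{T}(D))$ then yields nothing. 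Repairing this requires a genuine theorem, for instance Mostow's theorem that for a discrete subgroup of a connected solvable Lie group the product with the nilradical is closed, so that the image in the quotient by the nilradical is discrete; note also that the relevant subgroup there is the nilradical, which may be strictly larger than $U$ (it contains the compact subtorus of $T$ centralizing $U$), so your decomposition would have to be adjusted accordingly. The paper sidesteps all of this by a cohomological argument: $h(N_\G[H]\cap R)$ equals the rational homological dimension of this solvable group by Stammbach \cite{Stammbach}, which is bounded above by $\underline{\mathrm{gd}}(N_\G[H]\cap R)$, which in turn is at most $\dim R$ by \cite[Theorem 4.4]{Luck2}. Either import Mostow's theorem from \cite{Raghunathan} with the nilradical in place of $U$, or substitute the Stammbach--L\"uck argument; as written, your proof is incomplete at its crucial point.
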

\begin{proof} Let $R$ be the algebraic radical of the Zariski closure $\overline{N_\G[H]} \leq \mathrm{GL}(n,\mathbb{R})$. There is a short exact sequence
\[    1 \rightarrow R \rightarrow \overline{N_\G[H]} \xrightarrow{\pi} S \rightarrow 1    \] where $S=\overline{N_\G[H]}/R$ is semisimple.
Since $N_{\Gamma}[H]$ is Zariski dense in $\overline{N_\G[H]}$ we conclude that $Q= \pi(N_\G[H])$ is Zariski dense in $S$. Hence, by Proposition \ref{abels} it follows that $\pi(N_\G[H])$ is a discrete subgroup of the semisimple real algebraic group $S$. Since the Zariski connected abelian normal subgroup $\overline{H}$ of  $\overline{N_\G[H]}$ has finitely many Hausdorff connected components, up to passing to a finite-index subgroup, we may assume that $H$ is contained in $R$. Denoting $N=(R\cap N_\G[H])/{H}$, we obtain a short exact sequence 
\[    1 \rightarrow N \rightarrow N_\G[H]/H \rightarrow Q \rightarrow 1.   \]
Since every discrete subgroup of a connected solvable  Lie group is finitely generated (see \cite[Proposition 3.8]{Raghunathan}) and $R$, being an algebraic group, has finitely many connected components, it follows that $N$ is a finitely generated solvable group. Suppose $S$ had dimension $k$. Then $R$ has dimension $m-k$. The dimension of $R$ is an upper bound for $\underline{\mathrm{gd}}(R\cap N_\G[H])$ (e.g. see \cite[Theorem 4.4]{Luck2}). Moreover, $\underline{\mathrm{gd}}(R\cap N_\G[H])$ is bounded from below by the Hirsch length of $R\cap N_\G[H]$, since the Hirsch length of a solvable group coincides with its rational homological dimension (see \cite{Stammbach}) . It follows that $h(R\cap N_\G[H]) \leq m-k$. Hence, the Hirsch length of $N$ is at most $m-k-1$. 
\end{proof}
\section{The proofs of the main theorems}


We are now ready to prove Theorems A and B and their Corollary.
\begin{proof}[Proof of Theorem A]
Since the dimension of the Zariski closure  of $\Gamma$ in $\mathrm{GL}(n,\mathbb{R})$ is an upper bound for $\underline{\mathrm{gd}}(\Gamma)$ (e.g. see \cite[Theorem 4.4]{Luck2}), we have $\underline{\mathrm{gd}}(N_\G[H])\leq \underline{\mathrm{gd}}(\Gamma)\leq m$ for every infinite cyclic subgroup $H$ of $\G$.  Now fix $[H] \in \mathcal{I}$ and consider the exact sequence
\[  1 \rightarrow N \rightarrow N_{\G}[H]/H \rightarrow Q \rightarrow 1   \]
resulting from Proposition \ref{prop: exact seq}. By \cite[Lemma 4.2]{DP}, we have $\mathrm{gd}_{\mathcal{F}[H]}(N_{\G}[H])=\underline{\mathrm{gd}}(N_{\G}[H]/H)$ for every $[H] \in \mathcal{I}$. If the Hirsch length of $N$ is at most $1$, then since $N$ is finitely generated it follows that $N$ is virtually cyclic. So,  every finite extension $T$ of $N$ must be virtually cyclic as well. In this case one has $\underline{\mathrm{gd}}(T)\leq 1$. If the Hirsch length of $N$ is a least 2, then it follows from \cite[Corollary 4]{FloresNuc} that every finite extension $T$ of $N$ has $\underline{\mathrm{gd}}(T)\leq h+1$. Since $\underline{\mathrm{gd}}(S)\leq k $ by \cite[Theorem 4.4]{Luck2}, it follows from \cite[Corollary 2.3]{DP} that $\mathrm{gd}_{\mathcal{F}[H]}(N_{\G}[H])\leq (m-k-1)+1+k=m$ for every $[H]\in \mathcal{I}$. The theorem now follows from Corollary \ref{cor: push out}.
\end{proof}

\begin{proof}[Proof of Theorem B] We may assume that $\G$ is a subgroup of $\mathrm{SL}(n, \mathbb C)$  of integral characteristic. Let $A$ be the finitely generated unital subring of $\mathbb C$ generated by the matrix entries of a finite set of generators of $\G$ and their inverses. Then $\G$ is a subgroup of  $\mathrm{SL}(n, A)$. 

Let $\mathbb F$ denote the quotient field of $A$. Proceeding as in the proof of Theorem~3.3 of \cite{AS}, we obtain an epimorphism $\rho: \G \to H_1\times \dots \times H_r$ such that the kernel $U$ of $\rho$ is a unipotent subgroup of $H$ and for each $1\leq i\leq r$, $H_i$ is a subgroup of some $\mathrm{GL}(n_i, \mathbb F)$ of integral characteristic where the canonical action of $H_i$ on $\mathbb F^{n_i}$ is irreducible.  Following the proof of Proposition~2.3 of \cite{AS}, we have that for each subgroup $H_i$, there exists a finite field extension $L_i$ of $\mathbb Q$ such that $H_i$ is isomorphic to a subgroup $H'_i$ of some $\mathrm{GL}(m_i, L_i)$, which is absolutely irreducible and of integral characteristic. Now, according to the proof of Proposition~2.1 of \cite{AS}, each $H'_i$ embeds as a discrete subgroup of $\mathrm{GL}(m_i, \mathbb R)^{r_i}\times \mathrm{GL}(m_i, \mathbb C)^{s_i}$ for some nonnegative  integers $r_i$ and $s_i$.  So, by Theorem A, we have that $\underline{\underline{\mathrm{gd}}}(H_1\times \dots \times H_r)<\infty$. Applying Corollary~6.1 of \cite{DP}, we obtain that 
$$\underline{\underline{\mathrm{gd}}}(\G)\leq \underline{\underline{\mathrm{gd}}}(H_1\times \dots \times H_r) +h+3$$ where $h$ is the Hirsch length of $U$.
\end{proof}


\begin{proof}[Proof of Corollary.]As in the proof of Theorem B, $\G$ fits into an extension $$1\to U\to \G \to H_1\times \dots \times H_r\to 1$$ where $U$ is a unipotent subgroup and for each $1\leq i \leq r$, $H_i$ is a subgroup of integral characteristic of some $\mathrm{GL}(n_i, \mathbb F)$ such that the canonical action of $H_i$ on $\mathbb F^{n_i}$ is irreducible. Following the proof of Proposition 2.1 of \cite{AS}, we obtain that each $H_i$ is isomorphic to a discrete subgroup of $\mathrm{GL}(m_i, \mathbb R)^{r_i}\times \mathrm{GL}(m_i, \mathbb C)^{s_i}$ for some positive integers $r_i$ and $s_i$. By considering the upper central series of  the subgroup of strictly upper triangular matrices $\mathrm{Tr}(n,\mathbb F)$ of $\mathrm{GL}(n,\mathbb{F})$ and noticing that the additive group of $\mathbb{F}$ has finite Hirsch length because it is isomorphic to a finite direct product of copies of $(\mathbb{Q},+)$, it follows that $\mathrm{Tr}(n,\mathbb F)$ has finite Hirsch length. Since the subgroup $U$ of $\G$ is conjugate in $\mathrm{GL}(n,\mathbb{C})$ to a subgroup of $\mathrm{Tr}(n,\mathbb F)$, it also has finite Hirsch length. Just as in the proof of Theorem B, we now conclude that 
$\underline{\underline{\mathrm{gd}}}(\G)<\infty$.
\end{proof}

\section{The case of $\mathrm{SL}(3,\mathbb{Z})$}
Consider the group $\mathrm{SL}(3,\mathbb{Z})$ and let $\mathcal{I}$ be a set of representatives of the orbits of the conjugation action of $\mathrm{SL}(3,\mathbb{Z})$ on the set of equivalence classes of infinite virtually cyclic subgroups of $\mathrm{SL}(3,\mathbb{Z})$ (see Section 2). We note that the infinite virtually cyclic subgroups of $\mathrm{SL}(3,\mathbb{Z})$ are listed, up to isomorphism, in \cite{Stamm} and \cite{Upa}. From this classification  it follows that every infinite virtually cyclic subgroup $V$ of $\G$ that is not isomorphic to $\mathbb{Z}$ or $\mathbb{Z}\oplus \mathbb{Z}_2$ fits into a short exact sequence
\[   1 \rightarrow \mathbb{Z}\oplus \mathbb{Z}_2 \rightarrow V \rightarrow \mathbb{Z}_2 \rightarrow 1 .  \]
\begin{definition} \rm  We define the following subsets of $\mathcal{I}$.

\begin{itemize}
\item[(a)] The set $\mathcal{I}_1$ contains all $[H] \in \mathcal{I}$ such that $[H]$ has a representative whose generator has two complex conjugate eigenvalues and one real eigenvalue different from $1$;
\item[(b)] The set $\tilde{\mathcal{I}}_1$ contains all $[H] \in \mathcal{I}$ such that $[H]$ has a representative whose generator  has exactly one eigenvalue that is a root of unity.
\item[(c)] The set $\mathcal{I}_2$ contains all $[H] \in \mathcal{I}$ such that $[H]$ has a representative with a  generator all of whose eigenvalues are real and not equal to $\pm 1$;
\item[(d)] The set $\tilde{\mathcal{I}}_2$ contains all $[H] \in \mathcal{I}$ such that $[H]$ has a representative with a generator all of whose eigenvalues equal $1$ and which cannot be conjugated into the center of the strictly upper triangular matrices in $\mathrm{SL}(3,\mathbb{Z})$.
\item[(e)] The set $\mathcal{I}_3$ contains all $[H]$ such that $[H]$ has a representative with a generator that can be conjugated into the center of the strictly upper triangular matrices in $\mathrm{SL}(3,\mathbb{Z})$.
\end{itemize}
\end{definition}

\begin{lemma} One can write $\mathcal{I}$ as a disjoint union
\[     \mathcal{I}=\mathcal{I}_1 \sqcup  \tilde{\mathcal{I}}_1 \sqcup \mathcal{I}_2 \sqcup  \tilde{\mathcal{I}}_2  \sqcup \mathcal{I}_3 \]
 and the set $\mathcal{I}_3$ contains exactly one element.
\end{lemma}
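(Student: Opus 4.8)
The plan is to classify each commensurability class $[H]$ by the eigenvalues of a generator of an infinite cyclic representative, using that within a class one may freely replace a generator $t$ by any nonzero power $t^k$, since these generate commensurable subgroups. As $t \in \mathrm{SL}(3,\mathbb{Z})$, its characteristic polynomial is a monic integer cubic with constant term $-1$, so the eigenvalues multiply to $1$ and every rational eigenvalue equals $\pm 1$. A real eigenvalue is a root of unity exactly when it is $\pm 1$, while by Kronecker's theorem a non-real eigenvalue lying on the unit circle would force $t$ to have finite order. I would then introduce the invariant $r([H])$, the number of eigenvalues of $t$ (with multiplicity) that are roots of unity: a root of unity stays one under taking powers and a non-root-of-unity cannot become one, so $r$ is unchanged under $t \mapsto t^k$ and hence, comparing through common powers, is a well-defined invariant of $[H]$. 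Because the eigenvalues multiply to $1$, two of them being $\pm 1$ forces the third to be $\pm 1$, so $r \in \{0,1,3\}$.

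Next I would match the value of $r$ to the five sets. If $r=1$, the unique root-of-unity eigenvalue is real (a non-real one would come with its conjugate, giving an even count), hence $\pm 1$, and the other two are real and $\ne \pm 1$; this is precisely $\tilde{\mathcal{I}}_1$. If $r=0$, either all three eigenvalues are real, none equal to $\pm 1$, giving $\mathcal{I}_2$, or there is a genuine complex-conjugate pair together with a single real eigenvalue which, not being a root of unity, differs from $1$, giving $\mathcal{I}_1$; here a complex pair cannot coexist with the real value $1$ (that would put the pair on the unit circle and make $t$ finite order by Kronecker), and the real value $-1$ is impossible since the complex pair contributes a positive factor to the product. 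If $r=3$ every eigenvalue is a root of unity, so replacing $t$ by the power annihilating them yields a nontrivial unipotent generator, and such classes fall into $\tilde{\mathcal{I}}_2$ or $\mathcal{I}_3$ according to the conjugacy condition in the definition.

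For disjointness, the invariant $r$ already separates $\tilde{\mathcal{I}}_1$ (value $1$), $\mathcal{I}_1\cup\mathcal{I}_2$ (value $0$), and $\tilde{\mathcal{I}}_2\cup\mathcal{I}_3$ (value $3$). Within $r=0$ one must verify that the real/complex dichotomy is itself an invariant of $[H]$, i.e. that a class with a complex-eigenvalue generator admits no all-real generator; this is the main obstacle. I would settle it directly: if some power $t^N$ had all real eigenvalues, the complex pair $\lambda,\bar\lambda$ would satisfy $\lambda^N=\bar\lambda^N$, producing a real eigenvalue $\nu=\lambda^N$ of $t^N$ of multiplicity at least two with $|\nu|=|\lambda|^N\ne 1$, so $\nu$ is irrational. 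But an integer matrix cannot carry a repeated irrational eigenvalue: the minimal polynomial of $\nu$ would be irreducible of degree $2$ or $3$ and hence separable, forcing $\nu$ to appear only once in the degree-three characteristic polynomial, while degree $1$ is excluded since $\nu\ne\pm 1$. Thus no such $N$ exists and $\mathcal{I}_1\cap\mathcal{I}_2=\emptyset$.

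Finally, for the claim on $\mathcal{I}_3$, I would use that the center $Z$ of the strictly upper triangular subgroup of $\mathrm{SL}(3,\mathbb{Z})$ is $\{I+c\,E_{13} : c\in\mathbb{Z}\}\cong\mathbb{Z}$. Any two nontrivial subgroups of $Z$ meet in an infinite subgroup, so all nontrivial elements of $Z$ lie in a single commensurability class; together with conjugation this shows $\mathcal{I}_3$ has at most one element, and it is nonempty since $\langle I+E_{13}\rangle$ represents it. Disjointness from $\tilde{\mathcal{I}}_2$ follows from a Jordan-form computation: a generator commensurable with $I+E_{13}$ has a power equal to some central $I+c\,E_{13}$, which forces the unipotent to be of Jordan type $(2,1)$ with nilpotent part proportional to $E_{13}$, hence already lying in $Z$, whereas a regular unipotent (a single Jordan block) has no central power and so cannot occur in this class. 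Therefore every unipotent representative of the $\mathcal{I}_3$ class is conjugate into $Z$, condition (d) fails for it, and $\mathcal{I}_3\cap\tilde{\mathcal{I}}_2=\emptyset$, completing the partition.
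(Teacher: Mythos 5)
Your proof is correct, but there is nothing in the paper to compare it against: the authors' entire proof of this lemma reads ``This is left as an easy exercise to the reader,'' so your argument supplies precisely what the paper omits. The three points that genuinely require work are all handled soundly. First, the count $r([H])$ of root-of-unity eigenvalues of a generator is a commensurability invariant, since equivalent infinite cyclic subgroups share a nontrivial common power and passing to nonzero powers preserves both being and not being a root of unity; together with determinant $1$ this rules out $r=2$ and yields the trichotomy $r\in\{0,1,3\}$ corresponding to $\mathcal{I}_1\cup\mathcal{I}_2$, $\tilde{\mathcal{I}}_1$, and $\tilde{\mathcal{I}}_2\cup\mathcal{I}_3$. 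Second, the only place where a finer invariant is needed is $\mathcal{I}_1\cap\mathcal{I}_2=\emptyset$, and your argument there is the right one: a power with all real eigenvalues would force a repeated real eigenvalue $\nu$ with $|\nu|\neq 1$, hence irrational by the rational root theorem, and a matrix in $\mathrm{SL}(3,\mathbb{Z})$ cannot have a repeated irrational eigenvalue because the separable minimal polynomial of $\nu$, of degree $2$ or $3$, cannot divide the cubic characteristic polynomial compatibly with a double root. Third, for $\tilde{\mathcal{I}}_2\cap\mathcal{I}_3=\emptyset$ and $|\mathcal{I}_3|=1$ your Jordan-type argument works: nonzero powers of a regular unipotent are again regular unipotent, so only unipotents of type $(2,1)$ with nilpotent part proportional to $E_{13}$ can have a power conjugate into the center $Z$ of the strictly upper triangular group, and any two nontrivial subgroups of $Z\cong\mathbb{Z}$ are commensurable, so all such classes form a single conjugation orbit. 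Two cosmetic remarks: in the case $r=1$, membership in $\tilde{\mathcal{I}}_1$ is immediate from the definition, so your additional claim that the remaining two eigenvalues are real (which needs Kronecker's theorem plus positivity of $\lambda\bar{\lambda}$) is not actually needed; and in the case $r=0$ with a complex pair, the exclusion of the real eigenvalue $1$ already follows from $r=0$ alone, making the Kronecker parenthesis there redundant as well.
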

\begin{proof} This is left as an easy exercise to the reader. 
\end{proof}
The group $\G=\mathrm{SL}(3,\mathbb{Z})$ is a discrete subgroup of $\mathrm{GL}(3,\mathbb{R})$. Hence, we know from Lemma \ref{lemma: norm} that for every equivalence class $[H]$ of infinite virtually cyclic subgroups of $\G$, there exists an representative $H$ such that $N_\G[H]=N_\G(H)$. Using this fact, we will now determine for each $[H] \in \mathcal{I}$ the structure of the group $N_\G[H]$.

\begin{lemma} \label{lem: comm sl}For each $[H] \in \mathcal{I}$, the following holds.

\begin{itemize}
\item[(a)] If $[H] \in \mathcal{I}_1 $, then $N_{\G}[H]\cong \mathbb{Z}_2\oplus \mathbb{Z}$. 
\item[(b)] If $[H] \in \tilde{\mathcal{I}}_1$, then $N_{\G}[H]$ has a subgroup of index at most two isomorphic to $\mathbb{Z}_2 \oplus \mathbb{Z}$ 

\item[(c)] If $[H] \in \mathcal{I}_2$, then $N_{\G}[H]\cong \mathbb{Z}_2\oplus \mathbb{Z}^2$. 

\item[(d)] If $[H] \in \tilde{\mathcal{I}}_2$, then $N_{\G}[H]$ has a subgroup of index at most two isomorphic to $\mathbb{Z}^2$.
\item[(e)]  If $[H] \in \mathcal{I}_3$,  then $N_{\G}[H]$ is isomorphic to $\mathrm{Tr}(3,\mathbb{Z})\rtimes_{\mathbb{\varphi}} \Big(\mathbb{Z}_2\oplus \mathbb{Z}_2\Big)$, where 
\[   \mathrm{Tr}(3,\mathbb{Z})=\Big\langle x,y,z  \ | \ [x,y]=z , [x,z]=e,[y,z]=e  \Big\rangle   \]
is isomorphic to the group of strictly upper triangular integral matrices,
\[\varphi((1,0))(x)=x^{-1},\varphi((1,0))(y)=y^{-1},\varphi((1,0))(z)=z\] and 
\[\varphi((0,1))(x)=x^{-1},\varphi((0,1))(y)=y,\varphi((0,1))(z)=z^{-1}.\]

\end{itemize}
\end{lemma}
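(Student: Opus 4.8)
The plan is to treat all five cases through one dichotomy, according to whether a generator of the infinite cyclic part of $H$ is semisimple (cases (a), (b), (c)) or unipotent (cases (d), (e)). Throughout I would first invoke Lemma~\ref{lemma: norm} to fix a good representative $H=\langle A\rangle$, so that $N_\Gamma[H]=N_\Gamma(H)$ is literally the normalizer of $\langle A\rangle$ in $\Gamma=\mathrm{SL}(3,\mathbb{Z})$. This reduces each computation to two sub-problems: first determine the centralizer $C_\Gamma(A)$, and then decide whether the index $[N_\Gamma(H):C_\Gamma(A)]$, which is at most $2$ since $\langle A\rangle$ is infinite cyclic, is actually realized — equivalently, whether some $g\in\Gamma$ conjugates $A$ to $A^{-1}$. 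The eigenvalue conditions in the Definition are exactly what controls these two inputs.

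For the semisimple cases I would pass to the subalgebra $\mathbb{Q}[A]\subseteq M_3(\mathbb{Q})$. Since a generator has three distinct eigenvalues it is regular, so $\mathbb{Q}[A]$ is the full centralizer of $A$ in $M_3(\mathbb{Q})$ and is an étale $\mathbb{Q}$-algebra determined by the factorization of the characteristic polynomial. In cases (a) and (c) the characteristic polynomial is irreducible — a rational root would divide $\det A=1$ and hence be one of the excluded values $\pm1$ — so $\mathbb{Q}[A]$ is a cubic field of signature $(1,1)$, respectively $(3,0)$; in case (b) the eigenvalue that is a root of unity splits off a rational eigenline and $\mathbb{Q}[A]\cong\mathbb{Q}\times K'$ with $K'$ real quadratic. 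Then $C_\Gamma(A)$ is the group of determinant-one units of the order $\mathbb{Q}[A]\cap M_3(\mathbb{Z})$, and the Dirichlet unit theorem gives its free abelian rank directly: rank $1$ in (a) and (b), rank $2$ in (c). The finite part — in particular the torsion factor $\mathbb{Z}_2$ — I would pin down by comparing with the explicit list of infinite virtually cyclic subgroups of $\mathrm{SL}(3,\mathbb{Z})$ and their commensurators in \cite{Stamm, Upa}; in case (b) this $\mathbb{Z}_2$ is visibly the involution acting as $-1$ on the rational complementary plane and $+1$ on the rational eigenline. Finally, the index-two extension in (b) and (d) is settled by checking when the eigenvalue multiset of $A$ is invariant under inversion, i.e. when $A$ is $\Gamma$-conjugate to $A^{-1}$.

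The unipotent cases (d) and (e) I would handle by an explicit matrix computation inside the integral Heisenberg group $\mathrm{Tr}(3,\mathbb{Z})$. In case (e) the generator is conjugate into the centre $\langle z\rangle$, with $z=I+E_{13}$, so that $g\in N_\Gamma[H]$ forces $gE_{13}g^{-1}$ to be a scalar multiple of $E_{13}$; writing $E_{13}=e_1e_3^{\,\top}$ this means $g$ must fix the line $\langle e_1\rangle$ and the plane $\langle e_1,e_2\rangle$, hence be upper triangular. Intersecting the upper-triangular determinant-one integral matrices with $\mathrm{SL}(3,\mathbb{Z})$ then gives $\mathrm{Tr}(3,\mathbb{Z})$ as the unipotent part, while the diagonal sign matrices $\mathrm{diag}(\varepsilon_1,\varepsilon_2,\varepsilon_3)$ with $\varepsilon_i=\pm1$ and $\prod\varepsilon_i=1$ form exactly a complement $\mathbb{Z}_2\oplus\mathbb{Z}_2$; conjugation of such a diagonal matrix sends $x\mapsto x^{\varepsilon_1\varepsilon_2}$, $y\mapsto y^{\varepsilon_2\varepsilon_3}$, $z\mapsto z^{\varepsilon_1\varepsilon_3}$, which reproduces the two prescribed automorphisms $\varphi$. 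In case (d) the generator is a non-central unipotent, whose centralizer in $\mathrm{SL}(3,\mathbb{Z})$ contains a rank-two free abelian unipotent subgroup of finite index, and the index-two overgroup again comes from a diagonal involution normalizing it.

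The main obstacle, I expect, is the rigorous bookkeeping of the \emph{finite} parts in the semisimple cases. The number-field argument delivers the free rank cleanly via Dirichlet, and the inversion test cleanly decides the index-two extensions, but identifying the exact $\mathbb{Z}_2$ torsion does not follow from the abstract units-in-an-order computation alone — it has to be extracted from the explicit classification in \cite{Stamm, Upa}, and one must check carefully that these torsion elements genuinely lie in $\mathrm{SL}(3,\mathbb{Z})$ (rather than only in $\mathrm{GL}(3,\mathbb{Z})$ or $\mathrm{SL}(3,\mathbb{Q})$). A secondary point of care is case (e): one must verify that no integral matrix outside the upper-triangular ones preserves the central line up to commensurability, so that the stabilizer is \emph{exactly} $\mathrm{Tr}(3,\mathbb{Z})\rtimes(\mathbb{Z}_2\oplus\mathbb{Z}_2)$ with the stated action, and that $\mathcal{I}_3$ indeed contributes a single class as asserted in the preceding Lemma.
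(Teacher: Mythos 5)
Your overall route is essentially the paper's route: fix the representative supplied by Lemma~\ref{lemma: norm} so that $N_\Gamma[H]=N_\Gamma(H)$, compute the centralizer of a generator $A$, and control the rest via the embedding $N_\Gamma(H)/C_\Gamma(A)\hookrightarrow\mathrm{Aut}(\mathbb{Z})\cong\mathbb{Z}_2$. The paper likewise gets irreducibility of the characteristic polynomial in (a), (c) from the rational root theorem, invokes the Dirichlet unit theorem for the semisimple classes (citing \cite{kks} and \cite{mw}), reduces (b) to a centralizer computation in $\mathrm{GL}(2,\mathbb{Z})$, and treats (d), (e) by explicit upper-triangular matrix computations; your rank-one-matrix argument in (e) and the sign-diagonal description of the $\mathbb{Z}_2\oplus\mathbb{Z}_2$-action are a clean rendering of exactly what the paper does there.

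The serious point is the torsion in (a) and (c), and you have put your finger on precisely the right spot --- but it cannot be resolved in the way you hope. The group you correctly identify, namely the determinant-one units of the order $\mathcal{O}=\mathbb{Q}[A]\cap M_3(\mathbb{Z})$ in the cubic field $K=\mathbb{Q}[A]$, is torsion-free: $K$ has a real embedding, so its only roots of unity are $\pm 1$, and $\det(-I)=(-1)^3=-1$, so no nontrivial torsion unit has norm one. Carrying out your plan therefore yields $N_\Gamma[H]\cong\mathbb{Z}$ in case (a) and $N_\Gamma[H]\cong\mathbb{Z}^2$ in case (c), and no appeal to \cite{Stamm, Upa} can restore the stated $\mathbb{Z}_2$: any subgroup $\mathbb{Z}\oplus\mathbb{Z}_2$ of $\mathrm{SL}(3,\mathbb{Z})$ has its infinite-order generator commuting with an involution conjugate to $\mathrm{diag}(1,-1,-1)$, which forces an eigenvalue $\pm 1$ and hence puts the class in $\tilde{\mathcal{I}}_1$, not in $\mathcal{I}_1$ or $\mathcal{I}_2$. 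This is exactly the $\mathrm{GL}$-versus-$\mathrm{SL}$ conflation you warn against, and it is committed by the paper itself: the cited results compute centralizers in $\mathrm{GL}(n,\mathbb{Z})$, where the unit group is $\mathbb{Z}^{r+s-1}\oplus\langle -I\rangle$, and the $\langle -I\rangle$ factor is transported to $\mathrm{SL}(3,\mathbb{Z})$, where it does not live. So the obstacle you flag is fatal to parts (a) and (c) \emph{as stated}, not to your method; the discrepancy is harmless for the rest of Section 5, since $\mathbb{Z}$ and $\mathbb{Z}\oplus\mathbb{Z}_2$ are both virtually infinite cyclic, so all models and dimension counts are unchanged. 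Two smaller repairs to your write-up: in (d) you need the centralizer to be exactly $\mathbb{Z}^2$, not merely to contain one of finite index (this holds because anything commuting with the regular unipotent $A$ lies in $\mathbb{Q}[A]$, and determinant one forces it to be unipotent, as the paper's $ay-bx=0$ computation shows); and the inversion test is only needed in (a), (c), where it shows normalizer equals centralizer --- in (b), (d) the claims are only ``index at most two'', which is automatic.
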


\begin{proof}Take $[H] \in \mathcal{I}$ and let $A \in \G$ be an infinite order matrix such that $\mathrm{N}_{\G}[H]=\mathrm{N}_{\Gamma}(H)$, where $\langle A \rangle =H $. Note that we may replace $A$ by a power of $A$ in order to assume that $A$ does not have any eigenvalues that are non-trivial roots of unity.

First assume that $[H] \in \mathcal{I}_1 \sqcup \mathcal{I}_2$. This means that all eigenvalues of $A$ are different from $1$. The characteristic polynomial $p(x)$ of $A$ is therefore irreducible over $\mathbb{Q}$. Indeed, if $p(x)$ was reducible over $\mathbb{Q}$, then $A$ would have a rational eigenvalue $\mu$. But since $A \in \mathrm{SL}(3,\mathbb{Z})$, it follows from the rational root theorem that $\mu = \pm 1$, which is a contradiction. Also note that the normalizer of $H$ must equal the centralizer of $H$. Indeed, an element of the normalizer of $H$ that does not commute with $A$ must send a eigenvector of $A$ with eigenvalue $\mu$ to an eigenvector of $A$ with eigenvalue $\mu^{-1}$, which would imply that $A$ has an eigenvalue equal to $1$. As illustrated for example in \cite[Prop. 3.7]{kks} and \cite[section 4]{mw}, an application of the Dirichlet unit theorem shows that the centralizer $C_{\G}(H)$ of $A$ in $\Gamma$ equals $\mathbb{Z}^{r+s-1}\oplus \mathbb{Z}_2$, where $r$ is the number of real roots of $p(x)$ and $2s$ is the number of complex roots of $p(x)$. Hence, if all eigenvalues of $A$ are real then $N_{\G}(H)\cong \mathbb{Z}^2\oplus \mathbb{Z}_2$ and if $A$ has two complex conjugate eigenvalues then $N_{\G}(A)\cong \mathbb{Z}\oplus \mathbb{Z}_2$. This proves (a) and (c).

Secondly, assume that $[H] \in \tilde{\mathcal{I}}_1$. Then $A$ has exactly one eigenvalue equal to $1$. Hence, $A$ is conjugate in $\mathrm{SL}(3,\mathbb{Z})$ to a matrix of the form 
\[
\left[
\begin{array}{cc}
  1 & a \   \ b  \\ 
  0 & \raisebox{-10pt}{{\Large\mbox{{$M$}}}} \\[-2ex]
   & \\[-1.9ex]
  0 &
\end{array}
\right].
\]
Since we are only interested in the structure of the normalizer of $H$ up to isomorphism, we may as well assume that $A$ is of this form.  If a matrix $B \in \mathrm{SL}(3,\mathbb{Z})$ commutes with $A$ it must preserve the $1$-dimensional eigenspace of $A$ with eigenvalue $1$. Therefore, this is also an eigenspace of $B$, with eigenvalue $\pm 1$. We conclude that $B$ must be of the form
\[
B=\left[
\begin{array}{cc}
  \pm 1 & x \  \ y  \\ 
  0 & \raisebox{-10pt}{{\Large\mbox{{$N$}}}} \\[-2ex]
   & \\[-1.9ex]
  0 &
\end{array}
\right].
\]
By elementary matrix computations, one checks that such a matrix $B$ commutes with $A$ if and only if  $M$ commutes with $N$ and 
\[  (M^t - \mathrm{Id})\left[\begin{array}{c}x \\ y  \end{array}\right] = (N^t - \mathrm{Id})\left[ \begin{array}{c}a \\ b  \end{array}\right] .  \]
Since  $(M^t - \mathrm{Id})$ is an invertible matrix, $B$ is completely determined by $N$ and the fact that its commutes with $A$. We therefore obtain an isomorphism $C_{\G}(A)=C_{\mathrm{GL}(2,\mathbb{Z})}(M)$. By analyzing centralizers in $\mathrm{GL}(2,\mathbb{Z})$, for example using the Dirichlet unit theorem, it follows that the centralizer $C_{\G}(A)$ is  $\mathbb{Z}_2 \oplus \mathbb{Z}$. This proves (b).

Finally, assume that all eigenvalues of $A$ equal $1$. In this case $A$ can be conjugated inside $\mathrm{SL}(3,\mathbb{Z})$ to a strictly upper triangular matrix. Hence, we may again assume that $A$ is a strictly upper triangular matrix. If $A$ is of the form
\begin{equation}\label{eq: upper triangular}
\left[
\begin{array}{ccc}
  1 & a & c \\
  0 &1  & b \\
  0 & 0 & 1 
\end{array}
\right]
\end{equation}
where $a$ and $b$ are both non-zero, then one may check by elementary matrix computations that a matrix $B \in \mathrm{SL}(3,\mathbb{Z})$ commutes with $A$ if and only if it is of the form
\[
\left[
\begin{array}{ccc}
  1 & x & z \\
  0 & 1 & y \\
  0 & 0 & 1 
\end{array}
\right]
\]
where $ay-bx=0$. This shows that in this case the centralizer of $H$ in $\Gamma$ is isomorphic to $\mathbb{Z}^2$, and hence the normalizer $N_{\G}(H)$ has a subgroup of index at most two isomorphic to $\mathbb{Z}^2$. If on the other hand, $A$ is of the form (\ref{eq: upper triangular}) where $ab=0$ then $A$ can be conjugated in $\mathrm{SL}(3,\mathbb{Z})$ to matrix of the form (\ref{eq: upper triangular}) where $a$ and $b$ are both zero. In this case the centralizer $C_{\Gamma}(H)$ is isomorphic to the group 
\[   \left\{ \left[
\begin{array}{ccc}
  \pm 1 & x & z \\
  0 & 1 & y \\
  0 & 0 & \pm 1 
\end{array}
\right] \ \Bigg| \ x,y,z \in \mathbb{Z} \right\} . \]
One can now easily verify via explicit matrix computation that the normalizer $N_{\G}(H)$ is isomorphic to a semi-direct product of $C_{\Gamma}(H)$ with
\[   \left\{  \left[
\begin{array}{ccc}
  \pm 1 & 0 & 0\\
  0 & \pm 1  & 0 \\
  0 & 0 & 1  \end{array}\right] \right\}. \]It follows that $N_{\G}(H)$ is isomorphic to the semi-direct product \[\mathrm{Tr}(3,\mathbb{Z})\rtimes_{\mathbb{\varphi}} \Big(\mathbb{Z}_2\oplus \mathbb{Z}_2\Big).\]
  \end{proof}
  In \cite{Soule}, a 3-dimensional model $X$ for $\underline{E}\G$ is constructed. This model has the property that the orbit-space $\Gamma \setminus X=\underline{B}\G$ is contractible. Moreover, this model is of minimal dimension since $\Gamma$ contains the strictly upper triangular matrices $\mathrm{Tr}(3,\mathbb{Z})$, which has cohomological dimension $3$. Since for each $[H] \in \mathcal{I} $, $N_{\G}[H]$ is either virtually-$\mathbb{Z}$, virtually-$\mathbb{Z}^2$ or virtually-$\mathrm{Tr}(3,\mathbb{Z})$ by the lemma above, a model for $\underline{E}N_{\G}[H]$ can be chosen to be either $\mathbb{R}$, $\mathbb{R}^2$ or $\mathbb{R}^3$, respectively (see, e.g., \cite[Ex.~5.26]{Luck2}). Moreover, $H$ can be chosen to be normal in $N_{\G}[H]$ in which case a model for $E_{\mathcal{F}[H]}N_{\G}[H]$ is given by a model for  $\underline{E}N_{\G}[H]/H$, where the action is obtained via the projection $N_{\G}[H] \rightarrow N_{\G}[H]/H$.  Hence, a model for $E_{\mathcal{F}[H]}N_{\G}[H]$ can be chosen to be either $\{\ast\}$, $\mathbb{R}$ or $\mathbb{R}^2$, respectively. Using the universal property of classifying spaces for families, one obtains a cellular $\G$-equivariant map 
\[   f:  \coprod_{[H] \in \mathcal{I}} \Gamma \times_{\mathrm{N}_{\Gamma}[H]} \underline{E}N_\G[H] \rightarrow X   \]
Note that the mapping cylinder $M_f$ of $f$ is a $4$-dimensional model for $\underline{E}\G$, since $\underline{E}N_\G[H] $ is at most $3$-dimensional and $M_f$ is $\G$-homotopy equivalent to $X$. We  obtain an equivariant cellular inclusion
\[    i:  \coprod_{[H] \in \mathcal{I}} \Gamma \times_{\mathrm{N}_{\Gamma}[H]} \underline{E}N_\G[H] \rightarrow M_f.   \]   
Using $i$ and the models for $E_{\mathcal{F}[H]}N_G[H]$ described above, one can construct a $\G$-equivariant push-out diagram that by Theorem \ref{th: push out} produces a $4$-dimensional model $Y$ for $\underline{\underline{E}}\Gamma$. We claim that this model is of minimal dimension. Indeed, take $\G$-orbits of the push-out diagram constructed above and consider the long exact Mayer--Vietoris cohomology sequence with $\mathbb{Q}$-coefficients obtained from the resulting push-out diagram. This leads to the exact sequence
\[      \mathrm{H}^3(\underline{B}\Gamma,\mathbb{Q}) \rightarrow   \mathrm{H}^3(\underline{B}N_{\G}[H],\mathbb{Q}) \rightarrow  \mathrm{H}^4(\underline{\underline{B}}\Gamma,\mathbb{Q}) \rightarrow 0,       \]
where $[H] \in \mathcal{I}_3$. Since $\underline{B}\Gamma$ is contractible and $N_{\G}[H]\cong \mathrm{Tr}(3,\mathbb{Z})\rtimes_{\mathbb{\varphi}} (\mathbb{Z}_2\oplus \mathbb{Z}_2)$ by the lemma above , we obtain an isomorphism
\[      \mathrm{H}^3\Big(\mathrm{Tr}(3,\mathbb{Z})\rtimes_{\mathbb{\varphi}} (\mathbb{Z}_2\oplus \mathbb{Z}_2),\mathbb{Q}\Big) \cong  \mathrm{H}^4(\underline{\underline{B}}\Gamma,\mathbb{Q}) .    \]
As we are working with $\mathbb{Q}$-coefficients, an application of the Lyndon--Hochschild--Serre spectral sequence  tells us that 
\[  \mathrm{H}^3(\mathrm{Tr}(3,\mathbb{Z})\rtimes_{\mathbb{\varphi}} \Big(\mathbb{Z}_2\oplus \mathbb{Z}_2\Big),\mathbb{Q}) \cong  \mathrm{H}^3(\mathrm{Tr}(3,\mathbb{Z}),\mathbb{Q})^{\mathbb{Z}_2\oplus \mathbb{Z}_2}.  \]
Moreover, since $\mathrm{Tr}(3,\mathbb{Z})$ fits into the central extension
\[  1 \rightarrow \mathbb{Z}\cong\langle z \rangle \rightarrow \mathrm{Tr}(3,\mathbb{Z}) \rightarrow   \mathbb{Z}^2\cong \langle x,y \rangle \rightarrow 0, \]
another application of the Lyndon--Hochschild--Serre spectral sequence yields

\[\mathrm{H}^3(\mathrm{Tr}(3,\mathbb{Z}),\mathbb{Q})^{\mathbb{Z}_2\oplus \mathbb{Z}_2}\cong \mathrm{H}^2(\langle x,y \rangle,\mathrm{H}^1(\langle z \rangle, \mathbb{Q}))^{\mathbb{Z}_2\oplus \mathbb{Z}_2}.  \]
Using the explicit description of the map $\varphi: \mathbb{Z}_2\oplus \mathbb{Z}_2 \rightarrow \mathrm{Aut}(\mathrm{Tr}(3,\mathbb{Z}))$ in the lemma above, and the fact that 
\[\mathrm{H}^2(\langle x,y \rangle,\mathrm{H}^1(\langle z \rangle, \mathbb{Q}))= \mathrm{Hom}(\Lambda^2(\langle x , y \rangle ),\mathrm{Hom}((\Lambda^{1}(\langle z \rangle) ,\mathbb{Q})\cong \mathbb{Q,}\] one checks that the action of $\mathbb{Z}_2\oplus \mathbb{Z}_2 $ on $\mathrm{H}^2(\langle x,y \rangle,\mathrm{H}^1(\langle z \rangle, \mathbb{Q}))$ is trivial. We conclude that $\mathrm{H}^4(\underline{\underline{B}}\Gamma,\mathbb{Q}) \cong \mathbb{Q}$, proving that there cannot exists a model for $\underline{\underline{E}}\G$ of dimension strictly smaller than $4$.\\

As mentioned in the introduction, for $\G=\mathrm{SL}(3,\mathbb{Z})$, the Farrell--Jones conjecture implies that for any ring $R$ that is finitely generated as an abelian group, one has
\[   \mathrm{K}_n(R[\G])\cong  \mathcal{H}^{\G}_n(\underline{E}\Gamma;\mathbf{K}_R)\oplus \mathcal{H}^\Gamma_n(\underline{\underline{E}}\G,\underline{E}\G;\mathbf{K}_R) \]
for every $n \in \mathbb{Z}$. Using the model $Y$ for $\underline{\underline{E}}\G$ constructed above and Lemma \ref{lem: comm sl} we obtain a description of the term $ \mathcal{H}^\Gamma_n(\underline{\underline{E}}\G,\underline{E}\G;\mathbf{K}_R)$.  We summarize this description in the following theorem. Note that given a $\Gamma$-map $f: X \rightarrow Y$, the homology group $\mathcal{H}^\Gamma_n(Y,X;\mathbf{K}_R)$ is by definition the relative homology group  $\mathcal{H}^\Gamma_n(M_f, X;\mathbf{K}_R)$, where $M_f$ is the mapping cylinder of $f$.
\begin{theorem} \label{kth}Let $\G=\mathrm{SL}(3,\mathbb{Z})$ and let $R$ be a ring that is finitely generated as an abelian group. Then,
\[     \mathrm{K}_n(R[\G])\cong\mathcal{H}^\Gamma_n(\underline{E}\G;\mathbf{K}_R)\oplus \mathcal{H}_n(\mathcal{I}_1) \oplus \mathcal{H}_n(\tilde{\mathcal{I}}_1)  \oplus \mathcal{H}_n(\mathcal{I}_2) \oplus \mathcal{H}_n(\tilde{\mathcal{I}}_2)   \oplus \mathcal{H}_n(\mathcal{I}_3) \]
where, 
\begin{itemize}
\item[(a)] for $[H] \in \mathcal{I}_1$, $N_\G[H]\cong \mathbb{Z}_2\oplus \mathbb{Z}$, $\underline{\underline{E}}N_\G[H]=\{\ast\}$ , $\underline{E}N_\G[H]=\mathbb{R}$ and 
\[  \mathcal{H}_n(\mathcal{I}_1) =\bigoplus_{[H] \in \mathcal{I}_1}\mathcal{H}^{N_\Gamma[H]}_n(\{\ast\}, \mathbb{R};\mathbf{K}_R),  \]
\item[(b)] for $[H] \in \tilde{\mathcal{I}}_1$, $N_\G[H]$ has a subgroup of index at most two isomorphic to $\mathbb{Z}_2 \oplus \mathbb{Z},$  $\underline{\underline{E}}N_\G[H]=\{\ast\}$ , $\underline{E}N_\G[H]=\mathbb{R}$ and \[  \mathcal{H}_n(\tilde{\mathcal{I}}_1) =\bigoplus_{[H] \in \tilde{\mathcal{I}}_1}\mathcal{H}^{N_\Gamma[H]}_n(\{\ast\}, \mathbb{R};\mathbf{K}_R),  \]
\item[(c)] for $[H] \in \mathcal{I}_2$, $N_\G[H]\cong \mathbb{Z}_2\oplus \mathbb{Z}^2$, $E_{\mathcal{F}[H]}N_\G[H]=\underline{E}N_\G[H]/H=\mathbb{R}$ , $\underline{E}N_\G[H]=\mathbb{R}^2$ and 
\[  \mathcal{H}_n(\mathcal{I}_2) =\bigoplus_{[H] \in \mathcal{I}_2}\mathcal{H}^{N_\Gamma[H]}_n(\mathbb{R}, \mathbb{R}^2;\mathbf{K}_R),  \]
\item[(d)] for $[H] \in \tilde{\mathcal{I}_2}$, $N_\G[H]$ has a subgroup of index at most two isomorphic to $ \mathbb{Z}^2$, $E_{\mathcal{F}[H]}N_\G[H]=\underline{E}N_\G[H]/H=\mathbb{R}$ , $\underline{E}N_\G[H]=\mathbb{R}^2$ and 
\[  \mathcal{H}_n(\tilde{\mathcal{I}}_2) =\bigoplus_{[H] \in \tilde{\mathcal{I}}_2}\mathcal{H}^{N_\Gamma[H]}_n(\mathbb{R}, \mathbb{R}^2;\mathbf{K}_R),  \]
\item[(e)] for $[H] \in \tilde{\mathcal{I}_3}$, $N_\G[H]\cong \mathrm{Tr}(3,\mathbb{Z})\rtimes_{\mathbb{\varphi}} \Big(\mathbb{Z}_2\oplus \mathbb{Z}_2\Big), E_{\mathcal{F}[H]}N_\G[H]=\underline{E}N_\G[H]/H=\mathbb{R}^2$ , $\underline{E}N_\G[H]=\mathbb{R}^3$ and 
\[  \mathcal{H}_n(\mathcal{I}_3) =\mathcal{H}^{N_\Gamma[H]}_n(\mathbb{R}^2, \mathbb{R}^3;\mathbf{K}_R).  \]
\end{itemize}
\end{theorem}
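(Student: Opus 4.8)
The starting point is the decomposition recalled in the introduction: the Farrell--Jones conjecture for $\mathrm{SL}(3,\mathbb{Z})$ (valid for any ring $R$ that is finitely generated as an abelian group) identifies $\mathrm{K}_n(R[\G])$ with $\mathcal{H}^{\G}_n(\underline{\underline{E}}\G;\mathbf{K}_R)$, and the Bartels splitting gives
\[
\mathcal{H}^{\G}_n(\underline{\underline{E}}\G;\mathbf{K}_R)\cong \mathcal{H}^{\G}_n(\underline{E}\G;\mathbf{K}_R)\oplus \mathcal{H}^{\Gamma}_n(\underline{\underline{E}}\G,\underline{E}\G;\mathbf{K}_R).
\]
Thus the only thing to prove is a description of the relative summand $\mathcal{H}^{\Gamma}_n(\underline{\underline{E}}\G,\underline{E}\G;\mathbf{K}_R)$, and the plan is to compute it directly from the push-out model $Y=\underline{\underline{E}}\G$ built above.

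First I would exploit the push-out square of Theorem~\ref{th: push out}. Writing $A=\coprod_{[H]\in\mathcal{I}}\G\times_{N_\G[H]}\underline{E}N_\G[H]$ and $C=\coprod_{[H]\in\mathcal{I}}\G\times_{N_\G[H]}E_{\mathcal{F}[H]}N_\G[H]$, the inclusion $\underline{E}\G\hookrightarrow Y$ is the right-hand vertical map, and $i\colon A\to \underline{E}\G$ is a cofibration by construction (it is a cellular inclusion of $\G$-$CW$-complexes, realised through the mapping cylinder $M_f$). Replacing $C$ by the mapping cylinder of $g=\coprod_{[H]}\mathrm{id}\times f_{[H]}$ and collapsing $\underline{E}\G$, the quotient $Y/\underline{E}\G$ becomes $\G$-homotopy equivalent to the mapping cone of $g$; hence excision for the equivariant homology theory $\mathcal{H}^{\G}_*(-;\mathbf{K}_R)$ yields
\[
\mathcal{H}^{\Gamma}_n(\underline{\underline{E}}\G,\underline{E}\G;\mathbf{K}_R)\cong \mathcal{H}^{\G}_n(C,A;\mathbf{K}_R),
\]
where the right-hand side is the relative group of the pair defined via the mapping cylinder of $g$. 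The disjoint-union axiom then splits this as a direct sum indexed by $[H]\in\mathcal{I}$, and the induction structure of $\mathcal{H}^{\G}_*(-;\mathbf{K}_R)$ (which commutes with mapping cylinders and gives $\mathcal{H}^{\G}_n(\G\times_{N_\G[H]}Z;\mathbf{K}_R)\cong\mathcal{H}^{N_\G[H]}_n(Z;\mathbf{K}_R)$) converts each summand into $\mathcal{H}^{N_\G[H]}_n(E_{\mathcal{F}[H]}N_\G[H],\underline{E}N_\G[H];\mathbf{K}_R)$.

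It then remains to insert the explicit models. Using Lemma~\ref{lemma: norm} I would choose each representative $H$ so that $H$ is normal in $N_\G[H]$, whence $E_{\mathcal{F}[H]}N_\G[H]$ is a model for $\underline{E}(N_\G[H]/H)$ with the action pulled back along the projection. Partitioning $\mathcal{I}=\mathcal{I}_1\sqcup\tilde{\mathcal{I}}_1\sqcup\mathcal{I}_2\sqcup\tilde{\mathcal{I}}_2\sqcup\mathcal{I}_3$ and reading off the structure of $N_\G[H]$ from Lemma~\ref{lem: comm sl}, each commensurator is virtually $\mathbb{Z}$, virtually $\mathbb{Z}^2$, or virtually $\mathrm{Tr}(3,\mathbb{Z})$; correspondingly $\underline{E}N_\G[H]$ may be taken to be $\mathbb{R}$, $\mathbb{R}^2$, or $\mathbb{R}^3$, while $\underline{E}(N_\G[H]/H)$ is $\{\ast\}$, $\mathbb{R}$, or $\mathbb{R}^2$. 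Grouping the direct summands according to the five subsets (with $\mathcal{I}_3$ contributing a single term) and naming them $\mathcal{H}_n(\mathcal{I}_1),\dots,\mathcal{H}_n(\mathcal{I}_3)$ produces exactly the stated decomposition.

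The main obstacle is the excision step: one must verify carefully that the push-out of Theorem~\ref{th: push out}, together with the cofibrancy of $i$, produces a $\G$-homotopy equivalence between $Y/\underline{E}\G$ and the mapping cone of $g$, so that excision legitimately identifies $\mathcal{H}^{\Gamma}_n(\underline{\underline{E}}\G,\underline{E}\G;\mathbf{K}_R)$ with $\mathcal{H}^{\G}_n(C,A;\mathbf{K}_R)$. This forces one to keep track of the mapping-cylinder conventions underlying the relative groups and to confirm that induction commutes with the cylinder construction; both facts are standard for $\G$-$CW$-complexes and for equivariant homology theories in the sense of L\"uck, but they are what makes the otherwise routine case-by-case substitution legitimate. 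Once these structural facts are in place, the identification of each summand via Lemma~\ref{lem: comm sl} is immediate.
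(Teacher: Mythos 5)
Your proposal is correct and follows essentially the same route as the paper: Farrell--Jones plus the Bartels splitting reduce everything to the relative term $\mathcal{H}^{\Gamma}_n(\underline{\underline{E}}\G,\underline{E}\G;\mathbf{K}_R)$, which is then computed from the push-out model of Theorem \ref{th: push out} via excision and the induction structure, with the explicit models supplied by Lemma \ref{lem: comm sl}. The only difference is one of presentation: the paper leaves the excision and induction steps (and the mapping-cylinder conventions for the relative groups) implicit, whereas you spell them out.
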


\begin{center}\textbf{Acknowledgement}\end{center}
We would like to thank Herbert Abels for  helpful communications  that  led to the proof of Proposition \ref{abels}. The second-named author expresses his gratitude to the research group Algebraic Topology \& Group Theory at KU Leuven, Campus Kortrijk for their hospitality in spring 2013.


\begin{thebibliography}{12345}
\bibitem{Abels} Abels, H.,
    {\em On a theorem of Auslander}, preprint,
    http://www.math.uni-bielefeld.de/sfb701/files/preprints/sfb14002.pdf
   \bibitem{AS} Alperin, R.C. and Shalen, P.B., \emph{Linear Groups of
    finite cohomological dimension}. Invent. Math. Vol. {66} (1982), 89--98.
\bibitem{Bartels} Bartels, A.,
 {\em On the domain of the assembly map in algebraic K-theory} Algebr. Geom. Topol 3(1) (2003), 1037--1050
    \bibitem{BartelsLuckReichRueping} Bartels, A., L\"{u}ck, W.,  Reich H. and R\"{u}ping H.,
   {\em $K$- and $L$-theory of group rings over $\mathrm{GL}(n,\mathbb{Z})$}, Publ. Math. IHES. (to appear).

\bibitem{Borel} Borel, A.,
   {\em Linear algebraic groups}, Graduate text in mathematics, Vol. 126, 2nd Edition (1991), Springer-Verlag New York
\bibitem{DL} Davis, J. F. and L\"{u}ck, W.,
    {\em Spaces over a category and assembly maps in isomorphism conjectures in $K$- and $L$- theory}, $K$-theory $15$ ($1998$), $201$--$252$.
\bibitem{DP1} Degrijse, D. and Petrosyan, N.,
    {\em Commensurators and classifying spaces  with virtually cyclic stabilizers}, Groups, Geometry, and Dynamics Vol. 7(3)  (2013), 543--555.
\bibitem{DP} Degrijse, D. and Petrosyan, N.,
    {\em Geometric dimension of groups for the family of virtually cyclic subgroups}, Journal of Topology (to appear).
\bibitem{DP2} Degrijse, D. and Petrosyan, N.,
   {\em Bredon cohomological dimensions for groups acting on CAT(0)-spaces} preprint, (2013)
\bibitem{FloresNuc} Flores, R.J. and Nucinkis, B.E.A.,
   {\em On Bredon homology of elementary amenable groups}, Proc. Am. Math. Soc. Vol. $135$, Nr. $1$ ($2007$), $5$--$12$.
\bibitem{LearyPineda} Juan-Pineda, D., and Leary, I. J.,
    {\em On classifying spaces for the family of virtually cyclic subgroups}, Recent developments in alg. top., Vol. $407$ Contemp. Math., Am. Math. Soc. $(2006)$, $135$--$145$.
\bibitem{kks} Katok, A., Katok, S. and Schmidt, K ,
  {\em Rigidity of measurable structure for $\mathbb{Z}^d$-actions by automorphisms of a torus}, Commentarii Mathematici Helvetici Vol. 77(4) (2002), 718--745
\bibitem{Lafont} Lafont, J. F.,
    {\em Construction of classifying spaces with isotropy in prescribed families of subgroups},  L'Enseign. Math. (2) 54 (2008), 127--130.
\bibitem{LafontOrtiz} Lafont, J. F., and Ortiz, I. J.,
    {\em Relative hyperbolicity, classifying spaces, and lower algebraic K-theory},  Topology, Vol. $46$ Nr. $6$ $(2007)$, $527$--$553$.
\bibitem{LongReid} Long, D. and Reid. A.,
   {\em Small subgroups of $\mathrm{SL}(3,\mathbb{Z})$}, Experimental Math. 20 (2011), 412--425.
   \bibitem{Luck1}   L\"{u}ck, W.,
    {\em The type of the classifying space for a family of subgroups}, J. Pure Appl. Algebra $149$ ($2000$), $177$--$203$.
\bibitem{Luck2} L\"{u}ck, W.,
    {\em Survey on classifying spaces for families of subgroups}, Infinite Groups: Geometric, Combinatorial and Dynamical Aspects, Springer (2005), 269--322.
\bibitem{Luck3} L\"{u}ck, W.,
    {\em On the classifying space of the family of virtually cyclic subgroups for CAT(0)-groups}, M\"{u}nster J. of Math. 2 (2009), 201--214.
\bibitem{LuckReich}  L\"{u}ck, W. and Reich H.,
    {\em The Baum-Connes and the Farrell-Jones conjectures in K- and L-theory}, Handbook of K-theory. Vol. 2, Springer, Berlin (2005), 703--842.
\bibitem{LuckWeiermann} L\"{u}ck, W.  and Weiermann, M.,
        {\em On the classifying space of the family of virtually cyclic subgroups}, Pure and Applied Mathematics Quarterly, Vol. 8(2) (2012), 497--555.
        \bibitem{Martinez} Mart\'{\i}nez-P\'{e}rez, C.,
    {\em A spectral sequence in Bredon (co)homology}, J. Pure Appl. Algebra $176$ $(2002)$, $161$--$173$.
\bibitem{mw} Mikami, K, and Weinstein, A.,
     {\em Self-similarity of Poisson structures on tori}, Banach Center Publications (Polish Academy of Sciences) Vol. 51 (2000), 211--217
\bibitem{Raghunathan} Raghunathan, M.S.,
        {\em Discrete subgroups of Lie groups}, (1972), Springer-Verlag New York
\bibitem{Soule} Soul\'{e}, C.,
   {\em The cohomology of $\mathrm{SL}(3,\mathbb{Z})$}, Topology Vol. 17 (1978), 1--22.
\bibitem{Stamm} Stamm, R.,
  {\em The K- and L-theory of certain discrete groups}, Ph. D. thesis, Universit\"{a}t M\"{u}nster, (1999).
\bibitem{Stammbach} Stammbach, U.,
     {\em On the weak homological dimension of the group algebra of solvable groups}, J. Lond, Math. Soc. Vol. 2(3) (1970), 567--570.
\bibitem{Upa} Upadhyay, S.,
   {\em Controlled algebraic K-theory of integral group ring of $\mathrm{SL}(3,\mathbb{Z})$. } K-Theory, Vol. 10(4) (1996), 413--418.

\end{thebibliography}
\end{document}